\newcommand{\DP}{\operatorname{DP}}
\newcommand{\coDP}{\operatorname{co-DP}}
\newcommand{\F}{\mathcal{F}}
\newcommand{\Le}{\mathcal{L}}
\newcommand{\coLe}{\mathcal{L}^c}
\newcommand{\Z}{\mathbb{Z}}
\newcommand{\Zd}{\mathbb{Z}^d}
\newcommand{\N}{\mathbb{N}}
\newcommand{\A}{\mathcal{A}}
\newcommand{\B}{\mathcal{B}}
\newtheorem{theorem}{Theorem}[section]
\newtheorem{lemma}[theorem]{Lemma}
\newtheorem{proposition}[theorem]{Proposition}
\newtheorem{corollary}[theorem]{Corollary}
\theoremstyle{definition}
\newtheorem{definition}[theorem]{Definition}
\newtheorem{example}[theorem]{Example}
\newtheorem{remark}[theorem]{Remark}
\newtheorem{question}[theorem]{Question}
\title{Parametrized complexity of relations between multidimensional subshifts}
\author{Nicanor Carrasco-Vargas\thanks{Jagiellonian University, Krakow, Poland; \url{nicanor.vargas@uj.edu.pl}}\\ Benjamin Hellouin de Menibus\thanks{Université Paris-Saclay, CNRS, Laboratoire Interdisciplinaire des Sciences du Numérique, 91400 Orsay, France;  \url{hellouin@lisn.fr}}\\Rémi Pallen\thanks{Université Paris-Saclay, ENS Paris-Saclay, 91190 Gif-sur-Yvette, France; \url{remi.pallen@loria.fr}}}
\date{}
\begin{document}

\maketitle
\begin{abstract}
We study the parametrized complexity of fundamental relations between multidimensional subshifts, such as equality, conjugacy, inclusion, and embedding, for subshifts of finite type (SFTs) and effective subshifts. We build on previous work of E. Jeandel and P. Vanier on the complexity of these relations as two-input problems, by fixing one subshift as parameter and taking the other subshift as input. We study the impact of various dynamical properties related to periodicity, minimality, finite type, etc. on the computational properties of the parameter subshift, which reveals interesting differences and asymmetries.

Among other notable results, we find choices of parameter that reach the maximum difficulty for each problem; we find nontrivial decidable problems for multidimensional SFT, where most properties are undecidable; and we find connections with recent work relating having computable language and being minimal for some property, showing in particular that this property may not always be chosen conjugacy-invariant.
 \end{abstract}
\thispagestyle{empty}
\clearpage
\setcounter{page}{1}

\section*{Acknowledgements}
The authors are grateful to Sebasti\'an Barbieri for pointing out that \cite{barbieri2024zero} provides a proof for Theorem~\ref{prop:alldegree} and to Mathieu Hoyrup for the proof of Theorem \ref{thm:upper-bound-incl}.

\section{Introduction}
A subshift or shift space is a collection of colorings of the $d$-dimensional grid $\mathbb{Z}^d$ ($d\geq 1$) by a finite set of colors and avoiding a set of forbidden patterns. We are interested in subshifts of finite type (SFT) and effective subshifts, which can be defined by a set of forbidden patterns which is finite or recursively enumerable, respectively. 

Subshifts, in particular SFTs, have received a lot of interest as a source of a great variety of natural undecidable problems, starting from the Domino problem. In \cite{jeandel_hardness_2015}, Jeandel and Vanier described the computational difficult of a number of relations between subshifts: equality, conjugacy, inclusion, and embedding. We are interested in the parametrized version of the same problems, that is when one subshift is fixed and the other is given as input.

\begin{quote}
    \textbf{Conventions.} We denote by $Y$ the parameter subshift, which is fixed beforehand, and by $X$ the input subshift, which ranges over the class of SFTs or the class of effective subshifts. The integer $d$ always refers to the dimension. 
\end{quote}
In this work we will be interested in the following problems:
\begin{tasks}[style=itemize](6)
    \task {$X=Y$} 
    \task {$X\simeq Y$} 
    \task {$X\subseteq Y$} 
    \task {$X\hookrightarrow Y$} 
    \task {$Y\subseteq X$} 
    \task {$Y\hookrightarrow X$}
\end{tasks}
The symbol $\simeq$ indicates the existence of a conjugacy (continuous shift-equivariant bijection), and the symbol $\hookrightarrow$ indicates the existence of an embedding (continuous shift-equivariant injection). 

Let us explain our language by means of an example. Let $Y$ be the fullshift $\{0,1\}^{\Z^2}$. By ``the problem $X\simeq Y$ on SFT inputs'' we mean the problem of, on input an SFT $X$, determining whether the relation $X\simeq Y$ holds. An SFT input is specified by an alphabet and a finite collection of forbidden patterns, while an effective subshift input is specified by an alphabet and a Turing machine which recursively enumerates forbidden patterns.    
 
One motivation for this approach is to understand the relationship between the computational difficulty of these problems, seen as properties of $Y$, and its dynamical and combinatorial properties. While the two-input version of each problem has a single difficulty, 
some choices of parameter $Y$ make the problem as difficult as possible (as the two-input version) or, conversely, make the problem easier, and we manage to describe and sometimes characterize which properties of $Y$ impact which problem. Strikingly, these properties vary a lot depending on the considered problem: periodicity, minimality, finiteness, being of finite type, etc. For example, we find a connection between our results on $X\simeq Y$ and the recent work \cite{amir_minimality_2025}, which links computability and minimality properties of $Y$ (see Theorem \ref{sec:conjugacy}). This new finer perspective provides more information on the relationship between dynamical and computational properties of shifts.

A second motivation to study these parametrized problems is their connection to Rice-like theorems for properties of SFTs \cite{cervelle_tilings_2004,delvenne_quasi-periodic_2004,lafitte_computability_2008,carrasco-vargas_rice_2025}, which prove that most relevant properties of SFTs are undecidable, as expressed in the well-known metaphor ``swamp of undecidability'' from \cite{lind_multi-dimensional_2004}. While most properties that we consider can be easily shown to be undecidable thanks to these results, we do find some nontrivial decidable problems for SFT inputs (such as $Y\subseteq X$ and $Y\hookrightarrow X$ when $Y$ is finite), highlighting limits of these generic Rice-type theorems. Providing a complete characterization of decidable properties for SFT seems a worthy long-term goal.

Our main contributions can be summarized as follows.
\begin{description}
\item Section \ref{sec:Y-contained-in-X}: for an arbitrary $Y$, the problem $Y\subset X$ with SFT inputs is reducible to $\Le^c(Y)$, so it is decidable exactly when $Y$ has computable language. 
\item Section \ref{sec:Y-embeds-X}: when $Y$ is SFT, the problem $Y\hookrightarrow X$ with SFT inputs is always in $\Sigma_1^0$. We provide some sufficient conditions on $Y$ for decidability (being finite) and for undecidability (containing an almost periodic configuration). Surprisingly, this problem may sometimes be easier than $Y\subset X$ with SFT inputs.
\item Section \ref{sec:X-contained-in-Y-and-X-embeds-Y}: when $Y$ is effective, the problems $X\subset Y$ and $X\hookrightarrow Y$ are in $\Pi^0_2$ and $\Sigma^0_3$, respectively, and we provide examples of $Y$ reaching these upper bounds. Both problems go down to being $\Sigma^0_1$-complete when $Y$ contains finitely many subsystems of finite type.
\item Section \ref{sec:equality}: when $Y$ is effective, the problem $X=Y$ with effective inputs has complexity between $D(\Sigma_1^0)$ and $\Pi_2^0$. We show that the complexity reaches its upper bound if $Y$ is not an SFT, and its lower bound if $Y$ is an SFT with computable language.
\item Section \ref{sec:conjugacy}: when $Y$ is SFT, the problem $X\simeq Y$ with effective inputs has complexity between $D(\Sigma_1^0)$ and $\Sigma_3^0$. We find that the upper bound is reached on some examples, and the lower bound is reached if $Y$ is minimal for a conjugacy invariant $\Pi_1^0$ property (a result related with \cite{amir_minimality_2025}).
\end{description}

We have chosen to omit sofic subshifts from our results for space reasons, though we expect that all results on effective subshifts on dimension $d=1$ are also valid for sofic subshifts on dimension $d\geq 2$. This follows from classical simulation results \cite{aubrun2013simulation,durand_effective_2010} which are explained in detail in \cite[Section 1.3]{jeandel_hardness_2015}. We also expect that many of our arguments can be extended to subshifts over more general finitely generated groups with decidable word problem (and in some cases, undecidable domino problem), but we have chosen to restrict our study to $\mathbb{Z}^d$ as more tools are available.

\section{Preliminaries
}\label{sec:preliminaries}

\subsection{Subshifts}\label{def:subshifts}

Let $d\geq 1$ and let $\A$ be a finite alphabet. We endow $\A$ with the discrete topology and $\A^{\Z^d}=\prod_{\Z^d}\A=\{x\colon \Z^d\to \A\}$ with the product topology. The elements in $\A^{\Zd}$ are called \textbf{configurations}. We also consider the continuous left action $\Z^d \curvearrowright \A^{\Zd}$ by \textbf{shift translations}. Given $u\in\Z^d$ and $x\in \A^{\Zd}$ we define its shift or translation $\sigma^u(x)\in \A^{\Zd}$ by $\sigma^u(x)(v)=x(v+u)$, $v\in\Z^d$.

A \textbf{pattern} is a function $p\colon S\to \A$, where $S\subset\Zd$ is finite. We say that $p$ \textbf{appears} or \textbf{occurs} in a configuration $x$ when there is $u\in\Z^d$ such that $\sigma^u(x)|_S=p$. 
 A configuration $x$ is \textbf{strongly periodic} if its orbit $\{\sigma^u(x) : u\in\Z^d\}$ is finite; intuitively, $x$ can be described by a finite pattern repeated periodically following $d$ linearly independant period vectors.
 
A \textbf{subshift} is a (possibly empty) subset of $\A^{\Zd}$ which is topologically closed and invariant by shift translations. The subshift $\A^{\Zd}$ is called the \textbf{fullshift} on alphabet $\A$.
Alternatively, any subshift can be defined by forbidding a set of patterns $\F$: we define
\[X_{\mathcal F}=\{x\in \A^{\Zd} : \text{no pattern from $\mathcal F$ appears in $x$}\}.\]
A subshift $X$ is said to have \textbf{finite type} (\textbf{SFT} for short) if there is a finite set $\F$ such that $X=X_{\F}$. A subshift is \textbf{effective} when $X=X_{\F}$ for a $\Sigma_1^0$ set $\F$. The class of SFTs is properly contained in the class of effective subshifts. In the context of problems, SFTs are presented by their $\F$, and effective subshifts by the Turing machine which enumerates their $\F$.

A subshift is \textbf{minimal} if it contains no nonempty subshifts other than itself. 

The \textbf{language} $\Le (X)$ of a subshift $X$ is the set of all patterns that appear in some configuration in $X$, and $\coLe(X)$ is its complement. We remark that $X$ is effective exactly when $\Le(X)$ is a $\Pi_1^0$ set. 
We also define 
$\Le_n(X) \subseteq \Le(X)$ as the subset of patterns with support $\{0,\dots,n-1\}^d$.

\subsection{Embeddings and conjugacies}
A function $F$ from a subshift $X$ to a subshift $Y$ is a \textbf{morphism} when it is continuous and commutes with shift translations on $X$ and $Y$. We call it  \textbf{conjugacy} when it is bijective, \textbf{embedding} when it is injective, and \textbf{factor map} when it is surjective. A property of subshifts is called  a \textbf{dynamical property} when it is preserved by conjugacy. 

By the Curtis-Hedlund-Lyndon theorem, $F\colon X\to Y$ is a morphism if and only if $F$ is a \textbf{sliding block code}: there is a finite $D\subset \Zd$ and a \textbf{local function} $f\colon \A^{D}\to \B$ from the alphabet of $X$ to the one of $Y$ such that
\begin{equation}\label{local-rule}
\forall u\in \Z^d,\ F(x)(u)=f((\sigma^u(x))|_{D}).
\end{equation}
We extend $F$ to map patterns to patterns following Equation \ref{local-rule}: if $p$ has support $S$, then $F(p)$ has support $\{u\in\Z^d : u+D\subseteq S\}$.
The \textbf{radius} of $F$ is the smallest $r\in\N$ such that $F$ can be defined with a local rule with support $\{-r,\dots,r\}^d$.

The \textbf{topological entropy} of a subshift $X$ is defined as:
\[h_{top}(X)=\lim_{n\to\infty}\frac{1}{n^d}\log_2 |\Le_n(X)|\]
An important property is that $h_{top}(X)$ is a conjugacy invariant, and $h_{top}(X)\leq h_{top}(Y)$ whenever $X\subseteq Y$ or when $X$ embeds into $Y$. 
We say that $X$ is \textbf{entropy-minimal} when it has positive entropy and any proper subsystem has strictly lower entropy.

\subsection{Computability}

\begin{definition}
    If \textbf{P} is a problem, \textbf{coP} is the same problem where positive and negative outputs are switched. Furthermore, we say that $\textbf{P}$ is trivial when the answer is the same for every input (either ``yes'' or ``no'').
\end{definition}
The definition of trivial property is similar. 

Given a Turing machine $M$ and $n\in\N=\{0,1,\dots\}$ we write  $M(n)\downarrow$ if $M$ halts on input $n$, and $M(n)\uparrow$ otherwise. If $M(n)\downarrow$ then we denote by $M(n)$ the output of $M$ with input $n$. 

Our hardness proofs use reductions to the following classical problems:

\begin{definition}
    The problem \textbf{Halt} is the algorithmic problem of determining if a Turing machine halts on the empty input (denoted $M(\varepsilon)\downarrow$).
\end{definition}

\begin{definition}
    The problem \textbf{Total} is the algorithmic problem of determining if a Turing machine halts on every input.
\end{definition}

\begin{definition}\label{COFhard}
    The problem \textbf{COF} is the algorithmic problem of determining if a Turing machine has a cofinite language, i.e. whether it does not halt on only finitely many inputs.
\end{definition}

\begin{definition}The \textbf{domino problem} $\DP(\Zd)$ is the algorithmic problem of determining whether an SFT on $\Z^d$ is nonempty.
\end{definition}

\begin{proposition}[Theorems 2.4.2, 4.3.2 and 4.3.3 in \cite{soare_turing_2016}]\label{classicalpb}
    \textbf{Halt} is $\Sigma^0_1$-complete.
    \textbf{COF} is $\Sigma^0_3$-complete.
    \textbf{Total} is $\Pi^0_2$-complete.
\end{proposition}
Some problems we consider lie outside the standard arithmetical hierarchy. Let us recall that a subset of the natural numbers belongs to $D(\Sigma_1^0)$ when it can be written as the difference of two $\Sigma_1^0$ sets. 
\begin{proposition}
$\DP(\Zd)$ is decidable for $d=1$ \cite{lind_introduction_1995}, and $\Pi_1^0$-complete for $d\geq 2$ \cite{berger_undecidability_1966}. 
\end{proposition}

SFT in dimension $d\geq 2$ are able to embed computation in various ways; in our proofs, we use either the SFT given in Figure \ref{fig:computationtiles} or the Robinson SFT \cite{robinson_undecidability_1971}. 

\begin{figure}[h]
    \centering
\begin{tikzpicture}[scale=1.5]
\fill[gray!40] (0,0) -- (1,0) -- (1,1) -- (0,1) -- cycle;

\draw (1.5,0) -- (1.5,1) -- (2.5,1) -- (2.5,0) -- cycle;
\fill[gray!40] (1.5,0) -- (1.5,1)  -- (1.75,1) -- (1.75,0) -- cycle;

\draw (3,0) -- (3,1) -- (4,1) -- (4,0) -- cycle;
\fill[gray!40] (3,0) -- (3,0.25)  -- (4,0.25) -- (4,0) -- cycle;
\draw (3.5,1) node[above] {\tiny $a$};
\draw (3.5,0.5) node {$a$};

\draw (0,-1.5) -- (0,-0.5) -- (1,-0.5) -- (1,-1.5) -- cycle;
\fill[gray!40] (0,-1.5) -- (0,-1.25)  -- (1,-1.25) -- (1,-1.5) -- cycle;
\draw (0,-1) -- (0.25,-1) -- (0.25,-0.5);
\draw (0,-1) node[left] {\tiny $q_0$};
\draw (0.25,-0.5) node[above] {\tiny $q_0,a$};
\draw (0.5,-1) node {$a$};


\draw (3,-1.5) -- (4,-1.5) -- (4,-0.5) -- (3,-0.5) -- cycle;
\draw (3.5,-1) node {$a$};
\draw (3.5,-0.5) node[above] {\tiny $a$};
\draw (3.5,-1.5) node[below] {\tiny $a$};

\draw (1.5,-1.5) -- (2.5,-1.5) -- (2.5,-0.5) -- (1.5,-0.5) -- cycle;
\draw (2,-1) node {$a$};
\draw (2.5,-0.75) -- (1.75,-0.75) -- (1.75,-0.5);
\draw (1.75,-0.5) node[above] {\tiny $q,a$};
\draw (2.5,-0.75) node[right] {\tiny $q,\leftarrow$};
\draw (2,-1.5) node[below] {\tiny $a$};

\draw (4.5,0) -- (5.5,0) -- (5.5,1) -- (4.5,1) -- cycle;
\draw (5,0.5) node {$a$};
\draw (4.5,0.75) -- (4.75,0.75) -- (4.75,1);
\draw (4.75,1) node[above] {\tiny $q,a$};
\draw (4.5,0.75) node[left] {\tiny $q,\rightarrow$};
\draw (5,0) node[below] {\tiny $a$};

\draw (6,-1.5) -- (7,-1.5) -- (7,-0.5) -- (6,-0.5) -- cycle;
\draw (6.5,-1) node {$a$};
\draw (6.25,-1) circle (0.1);
\draw (6.25,-1) node {\small $q$};
\draw[->] (6.25,-1.5) -- (6.25,-1.1);
\draw (6.25,-0.9) -- (6.25,-0.75) -- (7,-0.75);
\draw (6.5,-0.5) node[above] {\tiny $a'$};
\draw (6.25,-1.5) node[below] {\tiny $q,a$};
\draw (7,-0.75) node[right] {\tiny $q',\rightarrow$};

\draw (6.7,-1.8) node {\tiny if $\delta(q,a)=(q',a',\rightarrow)$};

\draw (6,0) -- (6,1) -- (7,1) -- (7,0) -- cycle;
\fill[gray!40] (6,0) -- (6,1) -- (6.25,1) -- (6.25,0.25) -- (7,0.25) -- (7,0) -- cycle;
\draw (6.5,0.5) -- (7,0.5);
\draw (7,0.5) node[right] {\tiny $q_0$};
\draw (6.5,0) node[below] {\tiny \textit{Initial tile}};

\draw (4.5,-1.5) -- (5.5,-1.5) -- (5.5,-0.5) -- (4.5,-0.5) -- cycle;
\draw (5,-1) node {$a$};
\draw (4.75,-1) circle (0.1);
\draw (4.75,-1) node {\small $q$};
\draw[->] (4.75,-1.5) -- (4.75,-1.1);
\draw (4.75,-0.9) -- (4.75,-0.75) -- (4.5,-0.75);
\draw (5,-0.5) node[above] {\tiny $a'$};
\draw (4.75,-1.5) node[below] {\tiny $q,a$};
\draw (4.5,-0.75) node[left] {\tiny $q',\leftarrow$};

\draw (4.8,-1.8) node {\tiny where $\delta(q,a)=(q',a',\leftarrow)$};
\draw (1.55,-2.2) node[right] {\small One copy for each $a \in \Sigma\cup\{B\}, \ q \in Q$.};
\end{tikzpicture}
    \caption{An SFT that simulates a Turing machine $M$ on a free input, where $M$ has alphabet $\Sigma$ with blank symbol $B$, set of states $Q$, transition function $\delta$ and initial state $q_0$. The tiles are the elements of the alphabet, and adjacent edges with non-matching labels are forbidden. 
    It is straightforward to modify the tiles to enforce an empty input or any fixed finite input.
    This procedure of associating an SFT to a Turing machine originates in Hao Wang's proof that the origin constrained version of the domino problem is undecidable \cite{wang_proving_1961}, and the reader is referred to \cite{jeandel_undecidability_2020} for a more detailed exposition of the same construction. }
    \label{fig:computationtiles}
\end{figure}

\section{General upper and lower bounds}\label{sec:general-results}
The goal of this section is to review general upper and lower bounds for the parametrized problems of interest. By identifying simple cases, where the complexity of the parametrized problem can be derived from general facts, we also find more interesting cases that we handle in future sections. 

We start by reviewing hardness results for SFT inputs. From \cite{carrasco-vargas_rice_2025} we have some general tools to prove hardness results for dynamical properties of SFTs in dimension $d\geq 2$.
\begin{definition}\label{def:berger}
    A property $\mathcal P$ of SFTs is a \emph{Berger} property if there are two SFTs $X_{-}$ and $X_{+}$ such that 
    \begin{enumerate}
    \item $X_+$ satisfies $\mathcal P$;
    \item Every SFT $X$ for which there is a factor map from $X$ to $X_-$ does not satisfy $\mathcal P$;
    \item There is a morphism $X_+ \to X_-$.
    \end{enumerate}
\end{definition}
\begin{theorem}\label{adyan-rabin-theorem-for-sfts}
Let $d\geq 2$. 
    Every Berger property of $\mathbb{Z}^d$-SFTs is $\Sigma_1^0$-hard. 
\end{theorem}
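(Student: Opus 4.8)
The plan is to establish $\Sigma_1^0$-hardness by a many-one reduction from \textbf{Halt}, in the spirit of the Adian--Rabin theorem for group presentations. Given a Turing machine $M$, I would produce (uniformly computably) a $\Z^d$-SFT $Z_M$ such that $M(\varepsilon)\downarrow$ if and only if $Z_M$ satisfies $\mathcal P$. The construction is
\[
Z_M \;=\; X_{+}\ \sqcup\ \bigl(X_{-}\times R_M\bigr),
\]
where $\sqcup$ denotes the disjoint union of subshifts (over the disjoint union of alphabets, forbidding every pattern that mixes the two alphabets) and $R_M$ is the classical Berger--Robinson SFT simulating $M$, for which $R_M=\varnothing$ precisely when $M$ halts on the empty input. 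Disjoint unions and products of SFTs are again SFTs, with a finite forbidden-pattern set computable from $M$, so $M\mapsto Z_M$ is computable.

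First I would recall the properties of $R_M$ from \cite{berger_undecidability_1966,robinson_undecidability_1971}: using the self-similar hierarchy of the Robinson tiling together with the computation tiles of \Cref{fig:computationtiles}, one simulates $M$ on arbitrarily long tapes, and a valid configuration exists if and only if the simulation never reaches a halting state; hence $R_M\neq\varnothing \iff M(\varepsilon)\uparrow$. This is exactly where the hypothesis $d\geq 2$ enters, since $\DP(\Z^d)$ is decidable for $d=1$. In the halting case $R_M=\varnothing$, so $X_{-}\times R_M=\varnothing$ and $Z_M=X_{+}$; by the first Berger condition $X_{+}$ satisfies $\mathcal P$, hence so does $Z_M$.

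In the non-halting case $R_M\neq\varnothing$, and I claim $Z_M$ factors onto $X_{-}$, which by the second Berger condition forces $Z_M\not\models\mathcal P$. I would define $F\colon Z_M\to X_{-}$ to act as the projection $\pi_1$ on the component $X_{-}\times R_M$ and as a fixed morphism $\phi\colon X_{+}\to X_{-}$ on the component $X_{+}$; this is where the third Berger condition is used, as $\phi$ is needed precisely to define $F$ on the $X_{+}$-part. Since every configuration of $Z_M$ lies entirely in one component (the components use disjoint alphabets) and the component type is determined locally by the central letter, $F$ is a sliding block code, hence a morphism; and it is surjective because $\pi_1$ already maps $X_{-}\times R_M$ onto $X_{-}$ (using $R_M\neq\varnothing$). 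Combining the two cases gives $M(\varepsilon)\downarrow \iff Z_M\models\mathcal P$, and $\Sigma_1^0$-completeness of \textbf{Halt} (\Cref{classicalpb}) yields $\Sigma_1^0$-hardness of $\mathcal P$.

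I expect no single deep obstacle: the genuinely hard ingredient, the computation-simulating SFT $R_M$ with the stated emptiness dichotomy, is classical and can be cited. The remaining points require care but are bookkeeping rather than conceptual, namely checking that the disjoint union and product stay SFTs with a computable forbidden set, and verifying that the piecewise map $F$ is a legitimate sliding block code and is surjective. The argument is then a clean assembly of the three Berger conditions: condition~1 handles the halting case, condition~2 the non-halting case, and condition~3 supplies the factor map on the $X_{+}$ summand.
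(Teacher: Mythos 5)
Your proposal is correct and follows essentially the same route as the paper: both build $Z = X_{+}\cup(\text{test SFT}\times X_{-})$ over disjoint alphabets, so that an empty test SFT gives $Z$ set-theoretically equal to $X_{+}$ (condition 1), while a nonempty test SFT makes $Z$ factor onto $X_{-}$ via the projection glued with the morphism $X_{+}\to X_{-}$ (conditions 2 and 3). The only cosmetic difference is that you reduce from \textbf{Halt} by inserting the Robinson SFT $R_M$, whereas the paper reduces directly from the complement of the domino problem (taking an arbitrary SFT $X$ as the test system and recoding $X\times X_{-}$ onto a fresh alphabet), which amounts to the same argument with the Berger--Robinson step cited rather than performed.
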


This result was stated in \cite{carrasco-vargas_rice_2025} under the assumption that the property is preserved by conjugacy, but this assumption is in fact not necessary. We provide a proof of this fact in Appendix \ref{appendix-rice-theorems}. Theorem \ref{adyan-rabin-theorem-for-sfts} provides a lower bound for four out of the six parametrized problems mentioned in the introduction. Remark that the next result puts no assumptions on the parameter subshift (such as being SFT or effective). 
\begin{corollary}\label{prop:some-berger-properties}
    Let $d\geq 2$. Let $Y$ be a  $\Z^d$-subshift. The following problems with SFT inputs are either $\Sigma_1^0$-hard, or trivial.
    \begin{tasks}[style=itemize](6)
        \task $X=Y$
        \task $X\simeq Y$
        \task $X\subseteq Y$
        \task $X\hookrightarrow Y$
    \end{tasks}
\end{corollary}
    \begin{proof}
    Let $\A$ be the alphabet for $Y$. For $X\subseteq Y$ and $X\hookrightarrow Y$ we choose  $X_+ = \emptyset$ and $X_-$ the full shift on $|\A|+1$ symbols. The condition 2 from the definition of Berger properties is ensured by the fact that if there exists a factor map from $X$ to $X_-$ and $X$ is not conjugate to $X_-$, then $h_{top}(X)>h_{top}(X_-)$. For $X=Y$ and $X\simeq Y$ when $Y$ is not SFT, note that no SFT $X$ satisfies the property, so the property is trivial. If $Y$ is an SFT, then we choose $X_{+}=Y$ and $X_-=Y \times 2^{\Z^2}$. Here the second condition of Berger properties is ensured by the fact that $h_{top}(X_-)=h_{top}(Y \times 2^{\Z^2})=h_{top}(Y)+1$.
\end{proof}
\begin{remark}
Let $Y$ be a subshift which is not of finite type, and consider the problem $X\simeq Y$ with SFT inputs. In this case the answer is ``no'' on every input, so the problem is decidable. This often occurs when the parameter subshift does not belong to the class of input subshifts, so it is important to exclude these uninteresting cases from undecidability results. 
\end{remark}

As mentioned in the introduction, hardness results for $Y\subseteq X$ and $Y\hookrightarrow X$ with SFT inputs cannot be obtained with the same direct arguments. We study these problems in Section \ref{sec:Y-contained-in-X} and Section \ref{sec:Y-embeds-X}.

We now review upper bounds for SFT inputs. In dimension $d\geq 2$, 4 out of the 6 problems mentioned in the introduction admit a straightforward solution when we restrict both input and parameter to the class of SFTs. This generalizes \cite[Theorem 2.4]{jeandel_hardness_2015}. 

\begin{proposition}\label{upper-bounds-two-SFTs}\label{upper-bound-inclusion-uniform-sfts}\label{conjugacy-two-SFTs} 
Let $d\geq 2$, and let $Y$ be a nonempty $\Z^d$-SFT. The following problems are $\Sigma_1^0$-complete for SFT inputs:
\begin{tasks}[style=itemize](6)
    \task $X=Y$
    \task $X\simeq Y$
    \task $X\subseteq Y$
    \task $X\hookrightarrow Y$
\end{tasks} 
The same is true when both $X$ and $Y$ are SFTs and given as inputs.
\end{proposition}

To prove Proposition \ref{upper-bounds-two-SFTs} we need the following result. 
\begin{lemma}\label{upper-bound-morphisms}\label{upper-bound-embeddings}\label{upper-bound-inclusion}
    The following problems are $\Sigma_1^0$.
    Given as input an effective subshift $X\subseteq \A^{\Zd}$, an SFT $Y\subseteq \B^{\Zd}$, and (for 2. and 3.) a local function defining a morphism $F\colon \A^{\Zd}\to \B^{\Zd}$,
    \begin{enumerate}
    \item Determining whether $X\subseteq Y$;
    \item Determining whether $F(X)\subseteq Y$ (equivalently, $F$ is a morphism from $X$ to $Y$);
    \item Determining whether $F$ is an embedding from $X$ to $Y$.
    \end{enumerate}
\end{lemma}

This lemma is a slight generalization of Lemmas 2.1 and 4.2 in \cite{jeandel_hardness_2015}, which are stated for two SFTs. Since the proof is essentially the same, we defer it to the appendix.
 \begin{proof}[Proof of Proposition \ref{upper-bounds-two-SFTs}]
 The upper bound for $=$ and $\subseteq$ follows from Lemma \ref{upper-bound-morphisms}. The upper bound for $\hookrightarrow$ follows from the second item in Lemma \ref{upper-bound-embeddings}, enumerating all possible choices of $f$. The upper bound for $\simeq$ is proved in \cite[Theorem 2.3]{jeandel_hardness_2015}. 
 The lower bounds follow from Corollary \ref{prop:some-berger-properties}. 
\end{proof}

When the input ranges over effective subshifts in dimension $d\geq 1$, we have a stronger Rice theorem in the sense that only trivial problems are decidable. For $d\geq 2$ this is a consequence of  \cite[Theorem 5.1]{carrasco-vargas_rice_2025}, but we include the straightforward proof.
\begin{theorem}\label{rice-theorem-for-effective}
    Let $d\geq 1$. Every nontrivial property $\mathcal P$ of $\Z^d$-effective subshifts is undecidable. More precisely, $\mathcal P$ is $\Sigma_1^0$-hard if the empty subshift satisfies $\mathcal P$ and $\Pi_1^0$-hard otherwise.
\end{theorem}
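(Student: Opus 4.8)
The plan is to prove this by an effective version of Rice's theorem, reducing from \textbf{Halt} (which is $\Sigma_1^0$-complete by \Cref{classicalpb}) or from its complement. The decisive feature is that an effective subshift is presented to the algorithm by a machine enumerating a $\Sigma_1^0$ set of forbidden patterns, and the decision problem takes such a presentation as input. I will exploit two elementary facts about this presentation: enlarging the forbidden set only shrinks the subshift, and forbidding every letter of the alphabet collapses the subshift to $\emptyset$. This lets me build a subshift whose $\mathcal P$-status is governed by the halting of a given machine.

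Suppose first that the empty subshift satisfies $\mathcal P$. Since $\mathcal P$ is nontrivial, fix an effective subshift $X_-$ over some alphabet $\A$ which does \emph{not} satisfy $\mathcal P$ (so $X_-\neq\emptyset$). From the code of a Turing machine $M$ I construct an effective subshift $X_M$ as follows: I dovetail the enumeration of a fixed $\Sigma_1^0$ set of forbidden patterns defining $X_-$ with the simulation of $M$ on empty input, and as soon as $M$ halts I additionally forbid every letter $a\in\A$. By the s-m-n theorem this produces, computably and uniformly in $M$, a presentation of $X_M$. If $M(\varepsilon)\uparrow$ then no extra patterns are ever added and $X_M=X_-$, which fails $\mathcal P$; if $M(\varepsilon)\downarrow$ then all letters are eventually forbidden and $X_M=\emptyset$, which satisfies $\mathcal P$. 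Hence $M(\varepsilon)\downarrow$ if and only if $X_M$ satisfies $\mathcal P$, which is a many-one reduction from \textbf{Halt} and gives $\Sigma_1^0$-hardness.

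The complementary case is entirely symmetric. If the empty subshift does not satisfy $\mathcal P$, nontriviality provides an effective subshift $X_+\neq\emptyset$ satisfying $\mathcal P$, and I run the same construction starting from a presentation of $X_+$. Now $M(\varepsilon)\uparrow$ yields $X_M=X_+$, which satisfies $\mathcal P$, while $M(\varepsilon)\downarrow$ yields $X_M=\emptyset$, which does not. Thus $X_M$ satisfies $\mathcal P$ if and only if $M(\varepsilon)\uparrow$, reducing the complement of \textbf{Halt}, a $\Pi_1^0$-complete problem, to $\mathcal P$ and establishing $\Pi_1^0$-hardness.

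Notice that the whole argument is uniform in the dimension: nothing uses the geometry of $\Z^d$, only the $\Sigma_1^0$ presentation of forbidden patterns, which explains why (in contrast to the Berger-property result of \Cref{adyan-rabin-theorem-for-sfts}) it already holds for $d=1$. The single point that requires care is verifying that the ``switch'' to the empty subshift upon halting keeps the forbidden set $\Sigma_1^0$ and the resulting presentation computable uniformly in $M$; since we only ever append patterns to a set already being enumerated, this is immediate, and it is the one step on which the reduction genuinely rests.
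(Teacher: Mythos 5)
Your proof is correct and follows essentially the same approach as the paper's: a many-one reduction from \textbf{Halt} (or its complement) in which the forbidden-pattern enumeration of a witness subshift failing (resp.\ satisfying) $\mathcal P$ is augmented, upon halting of $M$, by patterns that collapse the subshift to $\emptyset$. The only cosmetic difference is that the paper forbids all patterns of support $\{-n,\dots,n\}^d$ when $M$ halts in $n$ steps, whereas you forbid every letter of the alphabet; both yield the same reduction.
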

\begin{proof}
    See Appendix \ref{appendix-rice-theorem-for-effective}.
\end{proof}

\begin{corollary}
    Let $d\geq 1$ and let $Y$ be a $\Z^d$-subshift. The following problems with effective inputs are either trivial, $\Sigma_1^0$-hard, or $\Pi_1^0$-hard.
\begin{tasks}[style=itemize](6)
    \task $X=Y$
    \task $X\simeq Y$ 
    \task $X\subseteq Y$
    \task $X\hookrightarrow Y$
    \task $Y\subseteq X$
    \task $Y\hookrightarrow X$
\end{tasks}
\end{corollary}
This lower bound is not optimal in general, and we are interested in understanding the exact complexity of these problems and how it depends on the parameter $Y$. An upper bound is the complexity of the two-input versions of these problems with effective inputs:
\begin{theorem}[\cite{jeandel_hardness_2015}]\label{upper-bound-inclusion-two-sofic-subshifts}
Let $d\geq 1$. The problem $X\subseteq Y$ (resp. $X=Y$) with two effective $\Z^d$-subshifts as input is $\Pi_2^0$-complete. 
\end{theorem}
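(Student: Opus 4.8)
The plan is to prove both directions — the $\Pi_2^0$ upper bound and $\Pi_2^0$-hardness — uniformly for each $d\geq 1$.

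For the upper bound I would start from the standard fact that for two subshifts $X\subseteq Y$ if and only if $\Le(X)\subseteq\Le(Y)$: the forward implication is immediate, and for the converse one uses that $Y$ is topologically closed, so a configuration all of whose finite patterns lie in $\Le(Y)$ avoids every pattern forbidden in $Y$ and therefore belongs to $Y$. Since $X$ and $Y$ are effective, $\Le(X)$ and $\Le(Y)$ are $\Pi_1^0$ (as recalled in \Cref{sec:preliminaries}), equivalently $\coLe(X)$ and $\coLe(Y)$ are $\Sigma_1^0$. Then
\[
X\not\subseteq Y \iff \exists p\ \big(p\in\Le(X)\ \wedge\ p\in\coLe(Y)\big),
\]
whose matrix is a conjunction of a $\Pi_1^0$ and a $\Sigma_1^0$ predicate, hence $\Sigma_2^0$; preceded by the existential quantifier it stays $\Sigma_2^0$, so $X\subseteq Y$ is $\Pi_2^0$. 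For equality, $X=Y\iff X\subseteq Y\wedge Y\subseteq X$ is a conjunction of two $\Pi_2^0$ predicates, hence again $\Pi_2^0$.

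For hardness I would reduce from \textbf{Total}, which is $\Pi_2^0$-complete by \Cref{classicalpb}. The key device is to realise arbitrary recursively enumerable sets $A\subseteq\N$ as languages of effective subshifts so that set inclusion becomes subshift inclusion. In dimension $1$, over the alphabet $\{0,\#\}$, I would let $S_A$ be the effective subshift forbidding the delimited blocks $\{\,\#\,0^n\,\# : n\in A\,\}$ (an r.e.\ set of patterns, so $S_A$ is effective). The delimiters make these blocks pairwise non-overlapping as subwords, so one checks directly that $\#\,0^n\,\#\in\Le(S_A)$ if and only if $n\notin A$; consequently $S_A\subseteq S_B\iff B\subseteq A$ and $S_A=S_B\iff A=B$.

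Finally I would instantiate this with $A=W_M$ (the r.e.\ set of inputs on which $M$ halts) and $B=\N$ (forbidding all blocks), so that $S_A\subseteq S_B\iff\N\subseteq W_M\iff M$ is total, and likewise $S_A=S_B\iff W_M=\N\iff M$ is total; this reduces \textbf{Total} to both problems in dimension $1$. To reach every $d\geq 1$ I would lift the construction by forcing configurations to be constant along the directions $e_2,\dots,e_d$ (an SFT condition) and imposing the one-dimensional rules along $e_1$, checking that the block-membership computation is unaffected. The main obstacle I anticipate is the bookkeeping around the equivalence $\#\,0^n\,\#\in\Le(S_A)\iff n\notin A$ — ensuring the delimiter encoding keeps forbidden blocks incomparable so that the language reflects $A$ exactly and the inclusion direction reverses cleanly — together with verifying that this equivalence survives the lift to higher dimensions.
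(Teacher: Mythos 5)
Your proposal is correct, but there is no internal proof to compare it against: the paper imports this statement from \cite{jeandel_hardness_2015} without reproving it, so your argument serves as a self-contained substitute for the cited result. Your upper bound is the standard one: $X\subseteq Y$ iff $\Le(X)\subseteq\Le(Y)$, and the quantifier count — an existential pattern quantifier over the conjunction of a $\Pi_1^0$ predicate ($p\in\Le(X)$) and a $\Sigma_1^0$ predicate ($p\in\coLe(Y)$) — places non-inclusion in $\Sigma_2^0$, hence inclusion in $\Pi_2^0$, with equality following as a conjunction. Your hardness reduction is also sound, and it is essentially the same device this paper uses for the \emph{parametrized} problem in \Cref{inclusion-non-sft}: realizing an r.e.\ set $A$ as a set of forbidden delimited blocks and exploiting $\#0^n\#\in\Le(S_A)\iff n\notin A$. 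One small imprecision: the blocks are not pairwise non-overlapping (two of them can share a delimiter, as in $\#0^n\#0^m\#$), but this is irrelevant — the equivalence follows directly from the witness configuration ${}^\omega 0\,\#0^n\#\,0^\omega$, which contains exactly two $\#$ symbols and hence exactly one block. Note also that with $B=\N$ your second input $S_\N$ is the sunny-side-up subshift, a fixed effective non-SFT, so your reduction in fact proves the stronger parametrized statement that $X\subseteq Y_0$ and $X=Y_0$ are $\Pi_2^0$-hard for this fixed parameter $Y_0$, an instance of what the paper establishes in \Cref{inclusion-non-sft} and \Cref{equality-non-sft}; and your lift to $d\geq 2$ by forcing constancy along $e_2,\dots,e_d$ mirrors the technique of \Cref{embedding-of-lift-sfts-is-decidable}.
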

\begin{theorem}[\cite{jeandel_hardness_2015}]\label{embedding-two-effective}\label{conjugacy-two-effective-subshifts}
Let $d\geq 1$. The problem $X\hookrightarrow Y$ (resp. $X\simeq Y$) with two effective $\Z^d$-subshifts as input is $\Sigma_3^0$-complete. 
\end{theorem}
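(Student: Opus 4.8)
\emph{The plan.} Both problems have the same two-sided shape, so I would treat them in parallel: first place each problem in $\Sigma_3^0$ by writing it as $\exists f\,[\,\Pi_2^0\text{-condition}(f)\,]$, where $f$ ranges over the computably enumerable set of local rules; then prove $\Sigma_3^0$-hardness by reducing $\mathbf{COF}$, which is $\Sigma_3^0$-complete by \Cref{classicalpb}.

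\emph{Upper bound.} By Curtis--Hedlund--Lyndon, $X\hookrightarrow Y$ iff there is a local rule $f$ whose global map $F$ is an injective morphism from $X$ to $Y$. I would analyse the two conditions on a fixed $f$ separately. For ``$F(X)\subseteq Y$'': writing $Y=\bigcap_k Y_k$ for the decreasing sequence of SFTs $Y_k$ obtained by forbidding the first $k$ enumerated patterns of $\F$, one has $F(X)\subseteq Y$ iff $F(X)\subseteq Y_k$ for all $k$; each of these is $\Sigma_1^0$ uniformly in $k$ by \Cref{upper-bound-morphisms}, so ``$F(X)\subseteq Y$'' is $\Pi_2^0$. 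For injectivity, I would form the pair subshift $Z=\{(x,x')\in(\A\times\A)^{\Zd} : x,x'\in X,\ F(x)=F(x')\}$, which is effective uniformly in $f$ (its forbidden set is the r.e. union of the forbidden sets of $X$ on each coordinate, together with the decidable set of pairs $(p,p')$ on the neighbourhood of $f$ with $f(p)\neq f(p')$). Then $F$ is injective iff no off-diagonal symbol $(a,b)$, $a\neq b$, lies in $\Le(Z)$; since $\Le(Z)$ is $\Pi_1^0$, this is a finite conjunction of $\Sigma_1^0$ statements, hence $\Sigma_1^0$. The conjunction of a $\Pi_2^0$ and a $\Sigma_1^0$ condition is $\Pi_2^0$, and prefixing $\exists f$ gives $\Sigma_3^0$. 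For conjugacy I would use that a conjugacy and its inverse are both sliding block codes, so $X\simeq Y$ iff there are local rules $f,g$ with $F(X)\subseteq Y$ and $G(Y)\subseteq X$ (both $\Pi_2^0$) and $G\circ F=\mathrm{id}_X$, $F\circ G=\mathrm{id}_Y$; the last two assert that a computable sliding block code agrees with the identity on $X$ (resp.\ $Y$), which is $\Sigma_1^0$ (no pattern on which the composed rule disagrees with its centre symbol lies in the $\Pi_1^0$ language). Again this is $\exists f,g\,[\Pi_2^0]=\Sigma_3^0$.

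\emph{Lower bound.} I would reduce $\mathbf{COF}$, using the characterisation $M\in\mathbf{COF}\iff \exists N\,\forall n\ge N\,\exists t\ [\,M\text{ halts on }n\text{ within }t\text{ steps}\,]$, so that the three quantifiers match, respectively, the outer search for a conjugating/embedding block code, a correctness requirement over all sufficiently large ``scales'', and the semi-decidable halting tests. Concretely, I would attach to each scale $n$ a finite gadget $G_n$ and build $X_M$ and $Y_M$ (uniformly effective in $M$, by dovetailing the halting computations into the forbidden sets) as a marked combination of these gadgets, arranged so that $X_M\hookrightarrow Y_M\iff X_M\simeq Y_M\iff M\in\mathbf{COF}$; the gadget at scale $n$ should be present and compatible in $Y_M$ exactly when $M(n)\downarrow$, while $X_M$ always demands it.

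\emph{Main obstacle.} The crux --- and the reason the problem is genuinely $\Sigma_3^0$ rather than $\Pi_2^0$ --- is designing $G_n$ with three competing features: \emph{rigidity} (a fixed finite-radius block code cannot fake a missing gadget, so correct behaviour at scale $n$ truly requires $M(n)\downarrow$), \emph{uniformity} (a single code must handle all large scales at once, so the outer $\exists$ corresponds to choosing a threshold $N$ rather than infinitely many independent choices), and the ability to \emph{hard-code} the finitely many scales below $N$ into the code regardless of halting (so that finitely many non-halting inputs are tolerated, matching cofiniteness). Balancing rigidity against uniformity is the delicate point; this is exactly where the construction of \cite{jeandel_hardness_2015} does the real work, and I would follow their gadgets rather than reinvent them.
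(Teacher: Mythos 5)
First, a point of context: the paper does not prove this theorem at all — it is imported from \cite{jeandel_hardness_2015}, as the citation in the theorem header indicates — so there is no internal proof to compare against; your attempt has to be judged on its own. Your upper-bound half is correct and complete. The decomposition $\exists f\,[\Pi_2^0]$ works: writing $Y=\bigcap_k Y_k$ with $Y_k$ the SFT approximations makes ``$F(X)\subseteq Y$'' a $\forall k$ over conditions that are $\Sigma_1^0$ uniformly in $k$ and $f$ (\Cref{upper-bound-morphisms}), hence $\Pi_2^0$; your pair-subshift $Z$ is effective uniformly in $f$, and since $Z$ is shift-invariant, non-injectivity of $F$ is equivalent to some off-diagonal symbol lying in the $\Pi_1^0$ set $\Le(Z)$, so injectivity is indeed $\Sigma_1^0$ (the paper's appendix proves the same fact by a compactness argument on $\Le_n(X)$; both routes are fine). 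The conjugacy case via two rules $f,g$ with $G\circ F=\mathrm{id}_X$ and $F\circ G=\mathrm{id}_Y$ is also sound: these identity checks are finite conjunctions of $\Sigma_1^0$ statements, and the two inverse conditions force $F$ to be a bijective morphism. All of this yields the $\Sigma_3^0$ upper bound correctly.

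The hardness half, however, is a genuine gap rather than a proof. You correctly identify the target (\textbf{COF}, $\Sigma_3^0$-complete by \Cref{classicalpb}), the quantifier-matching heuristic, and the three properties any gadget family must have (rigidity, uniformity across scales, tolerance of finitely many exceptions). But naming these desiderata is not the same as constructing subshifts $X_M,Y_M$ and verifying the equivalence $X_M\hookrightarrow Y_M \iff X_M\simeq Y_M \iff M\in\textbf{COF}$. In particular, the direction ``embedding exists $\Rightarrow$ language of $M$ cofinite'' requires an argument that a single finite-radius block code is defeated whenever infinitely many gadgets are missing from $Y_M$ — this is exactly the kind of argument carried out, in the parametrized setting, in \Cref{eventuallyperiodic} and the proof of the $\Sigma_3^0$-completeness result of \Cref{conjugacy-hardness-Z}, and it is the part you explicitly defer (``I would follow their gadgets rather than reinvent them''). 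Deferring to \cite{jeandel_hardness_2015} is legitimate scholarship for a cited theorem, but it means your text establishes only membership in $\Sigma_3^0$, not $\Sigma_3^0$-completeness.
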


\section{The inclusion problem $\mathbf{Y\subseteq X}$}\label{sec:Y-contained-in-X}

As we mentioned before, Rice-like theorems cannot be applied to prove general lower bounds for $Y\subseteq X$ with SFT inputs, and in fact this problem can be decidable for some choices of $Y$. 
\begin{theorem}\label{characterization-complexity-inclusion}
    Let $Y$ be a 
    subshift. Then the problem $Y\subseteq X$ on SFT inputs is enumeration-equivalent to $\Le(Y)^c$; that is, there exists a computable function which maps any enumeration of $\Le(Y)^c$ to an enumeration of all SFTs $X$ such that $Y \subseteq X$, and conversely.
\end{theorem}
\begin{proof}
    A pattern $p$ is in $\Le(Y)^c$ if and only if $Y \subseteq X_{\{p\}}$. To an enumeration $E$ of $\Le(Y)^c$, we associate the enumeration of all SFTs $X_{\F}$ such that $\F \subseteq E$.

    Conversely, to an enumeration $E$ of SFTs $X$ such that $Y \subseteq X$, we associate the enumeration of all patterns $p$ such that $p \in \F$ for some $X_{\F} \in E$.
\end{proof}
This result generalizes \cite[Proposition 6.2]{carrasco-vargas_rice_2025} which proves that the problem $Y\subseteq X$ for SFT inputs is decidable exactly when $Y$ has computable language. 

In the next result we are able to classify the possible difficulties of this problem when $Y$ is a $\Z^d$-SFT, $d\geq 2$. Namely, this is the class of all computably enumerable Turing degrees (all Turing degrees of $\Sigma_1^0$ subsets of $\N$). 
\begin{theorem}\label{prop:alldegree}
The class of Turing degrees of languages of SFTs on dimension $d\geq 2$ equals the class of computably enumerable Turing degrees.
\end{theorem}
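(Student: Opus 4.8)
\section*{Proof proposal}

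The statement asserts an equality of two classes of Turing degrees, so I would prove two inclusions. The inclusion ``$\subseteq$'' is immediate from the preliminaries: every SFT is effective, hence $\Le(X)$ is a $\Pi_1^0$ set, so $\coLe(X)$ is $\Sigma_1^0$, i.e.\ computably enumerable. Since any set is Turing-equivalent to its complement, $\Le(X)\equiv_T\coLe(X)$, and this common degree contains the c.e.\ set $\coLe(X)$; it is therefore a c.e.\ degree. The substance of the theorem is the reverse inclusion ``$\supseteq$'': given an arbitrary c.e.\ set $A\subseteq\N$, produce a $\Z^d$-SFT $X$ with $d\geq 2$ such that $\Le(X)\equiv_T A$.

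My plan for ``$\supseteq$'' is to first realize the degree with a one-dimensional effective subshift, where I have explicit control, and then transport it to a two-dimensional SFT by a simulation theorem. For the one-dimensional step, fix a c.e.\ set $A$ and let $Y=X_{\F}\subseteq\{0,1\}^{\Z}$ with $\F=\{0\,1^n\,0 : n\in A\}$; this is effective since $A$ is c.e. The word $0\,1^m\,0$ contains no factor of the form $0\,1^n\,0$ other than itself, so $0\,1^m\,0\in\Le(Y)$ iff $m\notin A$. This yields $A\leq_m\coLe(Y)$, hence $A\leq_T\Le(Y)$. For the converse reduction, a finite word $w$ belongs to $\Le(Y)$ iff it extends to a bi-infinite configuration avoiding $\F$; padding any leading or trailing block of $1$'s to a half-infinite block of $1$'s (and any $0$-boundary with $0$'s) creates no new $0$-bounded run, so $w\in\Le(Y)$ iff none of the finitely many maximal $0$-bounded runs of $1$'s inside $w$ has length in $A$. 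This test is decidable with oracle $A$, giving $\Le(Y)\leq_T A$ and therefore $\Le(Y)\equiv_T A$.

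It remains to pass from the effective $\Z$-subshift $Y$ to a $\Z^2$-SFT $X$ while preserving the Turing degree of the language; higher dimensions then follow by taking products with a full shift. Note that a one-dimensional SFT has computable language, so $Y$ must be a genuinely effective (non-SFT) subshift, and a two-dimensional SFT can forbid only finitely many patterns and must encode $A$ through embedded computation — this is precisely why the dimension hypothesis $d\geq 2$ and the simulation step are unavoidable. Here I would invoke the simulation machinery of \cite{aubrun2013simulation,durand_effective_2010}, which realizes any effective one-dimensional subshift inside a two-dimensional SFT via a self-simulating hierarchical structure that enumerates and checks the forbidden words of $Y$. The reduction $\Le(Y)\leq_T\Le(X)$ is the easy direction, as occurrences of $Y$-words correspond computably to windows of $X$. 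The main obstacle is the opposite reduction $\Le(X)\leq_T A$: one must verify that deciding whether a finite window of $X$ extends to a full configuration reduces to $A$ rather than to $\emptyset'$. This is the delicate point, since a generic SFT has $\Pi_1^0$-complete language; it requires that every halting and consistency question raised by the simulating machinery be answerable from the specific set $A\equiv_T\Le(Y)$ alone, forcing a construction whose auxiliary computations are ``tame'' relative to $Y$. This degree-preserving simulation is carried out in \cite{barbieri2024zero}, which I would cite for this step; combining it with the one-dimensional gadget above completes the proof.
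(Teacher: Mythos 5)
Your proposal is correct and follows essentially the same route as the paper: the easy inclusion via $\Pi_1^0$ languages, then realizing an arbitrary c.e.\ degree by the one-dimensional effective subshift forbidding $\{01^n0\}$ over the given set, and finally transferring the degree to a $\Z^2$-SFT via the simulation theorem of \cite{aubrun2013simulation} together with the computable reconstruction function of \cite{barbieri2024zero}. The only differences are cosmetic (you index forbidden words by the c.e.\ set rather than the complement of a $\Pi_1^0$ set, and you spell out the padding argument for $\Le(Y)\equiv_T A$ that the paper leaves implicit).
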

\begin{proof}
We fix $d=2$. Since the language of a $\Z^2$-SFT is always a $\Pi_1^0$ set, its Turing degree is a computably enumerable degree. To prove our claim, it suffices to show that for every $\Pi_1^0$ set $S\subseteq \N$ we can find a $\Z^2$-SFT $Y_S$ such that $\Le(Y_S)$ and $S$ are Turing-equivalent. Consider the subshift $X_S\subset\{0,1\}^\Z$ defined by the set of forbidden patterns $\{01^n0 : n\notin S\}$. Then $X_S$  is effective and $\Le(X_S)$ is Turing-equivalent to $S$. By \cite{aubrun2013simulation}, there is a $\Z^2$-SFT $Y_S$ that simulates $X_S$; without stating their result formally, we only stress that there is a computable surjection $\Le(X_S) \to \Le(Y_S)$ (projective subdynamics) and, as shown in \cite[Section 5]{barbieri2024zero}, there is a computable \emph{reconstruction function} that allows one to compute $\Le(Y_S)$ from $\Le(X_S)$, so that $\Le(Y_S)$ and $\Le(X_S)$ are Turing-equivalent.
\end{proof}

In the previous proofs we are not able to obtain a many-one equivalence, so we ask the following question.
\begin{question}
    Which many-one degrees are achieved by the problem $Y\subseteq X$ with SFT inputs?
\end{question}

For effective inputs, the upper bound is reached.
We are indebted to Mathieu Hoyrup (personal communication) for the following proof.

\begin{theorem}\label{thm:upper-bound-incl}
    There exists an SFT $Y$ such that the problem $Y\subseteq X$ for effective inputs is $\Pi^0_2$-complete.
\end{theorem}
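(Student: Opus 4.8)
The goal is to find a single SFT $Y$ for which the inclusion problem $Y\subseteq X$ (over effective inputs $X$) attains the full $\Pi_2^0$ upper bound from \Cref{upper-bound-inclusion}. The plan is to reduce the canonical $\Pi_2^0$-complete problem \textbf{Total} to an instance of $Y\subseteq X$. Recall from \Cref{characterization-complexity-inclusion} that $Y\subseteq X$ reduces to checking, for each forbidden pattern of $X$, whether it lies in $\coLe(Y)$; but for effective inputs $X$ the forbidden set is only recursively enumerable, so the membership tests must be interleaved with the enumeration of forbidden patterns, which is precisely what produces the extra quantifier and pushes the complexity up to $\Pi_2^0$.

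The key step is to choose $Y$ so that its language encodes the halting data of Turing machines in a way that lets me transfer \textbf{Total} to an inclusion question. The natural candidate is a one-dimensional computation-type SFT (or a two-dimensional SFT simulating one, as in \Cref{fig:computationtiles}) whose patterns of a given size correspond to bounded-length computation histories. Given a Turing machine $M$, I would build an \emph{effective} subshift $X_M$ whose forbidden patterns are, roughly, those configurations witnessing that $M$ halts on some input $n$ within a certain time; the recursive enumerability of halting computations makes $X_M$ a legitimate effective input. I then arrange matters so that $Y\subseteq X_M$ holds if and only if \emph{every} allowed pattern of $Y$ survives in $X_M$, which translates to $M$ halting on \emph{all} inputs, i.e. $M\in\textbf{Total}$. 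The universal quantifier over inputs $n$ and the existential quantifier over halting times give exactly the $\forall\exists$ shape of a $\Pi_2^0$ predicate.

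Concretely, I would encode each input $n$ into a distinct family of patterns of $Y$, and design the forbidden set of $X_M$ so that the patterns associated to $n$ are permitted in $X_M$ precisely when $M(n)\downarrow$. Then $Y\subseteq X_M$ iff all of $Y$'s patterns are permitted iff $M$ halts on every $n$. One must check the two directions of the reduction carefully: that $Y\subseteq X_M$ genuinely forces halting on all inputs (completeness of the encoding), and that halting on all inputs genuinely yields the inclusion (soundness), the latter requiring that no pattern of $Y$ is accidentally forbidden for extraneous reasons.

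The main obstacle will be the \emph{soundness} direction and the bookkeeping that makes the equivalence exact rather than merely a one-sided implication. It is easy to forbid enough patterns to detect non-halting, but one must ensure that when $M$ is total, the resulting effective subshift $X_M$ actually contains all of $Y$ — meaning every legal $Y$-pattern remains unforbidden — without the forbidden-pattern enumeration inadvertently excluding some $Y$-pattern. Getting the alphabets and local rules of $Y$ and $X_M$ to line up so that $\Le(Y)$ embeds cleanly into $\coLe(X_M)\cup\Le(X_M)$ in the total case, while the inclusion fails as soon as a single input diverges, is the delicate part; the membership-versus-enumeration interplay identified in \Cref{characterization-complexity-inclusion} is what must be exploited to land exactly at $\Pi_2^0$ rather than higher in the hierarchy.
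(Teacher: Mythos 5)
Your plan has a fatal step: the requirement that you can ``design the forbidden set of $X_M$ so that the patterns associated to $n$ are permitted in $X_M$ precisely when $M(n)\downarrow$'' is impossible. For any computable map $M \mapsto X_M$ producing effective subshifts and any computable assignment $n \mapsto p_n$ of patterns, the predicate ``$p_n \in \Le(X_M)$'' is uniformly $\Pi^0_1$ (the language of an effective subshift is co-c.e.), while ``$M(n)\downarrow$'' is $\Sigma^0_1$-complete as a relation; they cannot coincide for machines with undecidable domain. Worse, the design is self-defeating: if it were achievable, then $Y\subseteq X_M \Leftrightarrow \forall n\,(p_n \in \Le(X_M))$ would be a $\Pi^0_1$ predicate of $M$ (a computable conjunction of $\Pi^0_1$ facts), so your reduction would prove $\textbf{Total}\in\Pi^0_1$ rather than establish $\Pi^0_2$-hardness. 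The other reading of your sketch --- forbidding in $X_M$ the (indeed c.e.) halting witnesses of $M$ --- also fails: then $Y\subseteq X_M$ says ``no halting computation of $M$ appears in $\Le(Y)$'', which is only a $\Pi^0_1$ condition (and is trivially true if $Y$ forbids the halting state). Finally, your ``natural candidate'' of a one-dimensional computation-type SFT can never work: $\Z$-SFTs have computable (regular) languages, so for such $Y$ the problem $Y\subseteq X_{\F}$, being equivalent to $\F\subseteq\coLe(Y)$ with $\coLe(Y)$ decidable and $\F$ c.e., is $\Pi^0_1$.

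The missing idea is that the halting information must live on the \emph{parameter} side, not the input side: since $Y\subseteq X_{\F}$ iff $\F\subseteq\coLe(Y)$, the inner $\Sigma^0_1$ predicate of the $\forall\exists$ formula has to be membership in $\coLe(Y)$, so one needs $\coLe(Y)$ to be $\Sigma^0_1$-complete under many-one reductions, uniformly. Concretely, take $Y$ to be a two-dimensional SFT as in \Cref{fig:computationtiles} simulating a universal machine with the halting state forbidden, so that the seed pattern $s_{M,n}$ carrying input $(M,n)$ satisfies $s_{M,n}\in\Le(Y)$ iff $M(n)\uparrow$; then let $X_M$ forbid the \emph{computable} family $\{s_{M,n} : n\in\N\}$ together with the defining patterns of $Y$. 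One gets $Y\subseteq X_M$ iff $\forall n\, s_{M,n}\in\coLe(Y)$ iff $M\in\textbf{Total}$, which is the reduction you wanted. Note that this repaired argument is genuinely different from the paper's proof, which instead builds $Y$ to compare its rational input against approximations of Chaitin's $\Omega$ and obtains hardness from Solovay completeness of $\Omega$; but both correct proofs share exactly the structure your sketch inverts --- a computably enumerable forbidden set on the input, with all $\Sigma^0_1$ tests absorbed into $\coLe(Y)$.
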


\begin{proof}
    A number $l$ is left-c.e.\ if there exists a computable sequence $(l(k))_{k \in \N}$ of rational numbers such that $l(k)\le l(k+1) \le l$ for all $k$ and $l(k) \overset{k \rightarrow \infty}{\longrightarrow}l$. Chaitin's omega constant, denoted by $\Omega$, is left-c.e. Let $(l_i)_{i \in \N}$ be a computable enumeration of all left-c.e.\ numbers.
    
    First we prove that $B=\{i \ | \ \Omega \le l_i \}$ is $\Pi^0_2$-hard. We reduce the problem \textbf{Total}. Given $M$ a Turing machine, let $M'$ be the Turing machine which simulates $M(0)$, then $M(1)$, and so on. Let $k$ be such that $l_k(n)=\Omega(\#\{i\le n \ | \ M \ \mbox{ halts at time } i \mbox{ in the computation of } M' \})$
    It is clear that $M \in \textbf{Total} \Leftrightarrow k \in B$.
    
    Next we prove that $A=\{i \ | \ l_i \le \Omega\}$ is also $\Pi^0_2$-hard, reducing $B$. Given $i\in \N$, we can compute $c \in \N$ and $k$ such that $c\Omega-l_i =l_k$ (this follows from the fact that $\Omega$ is Solovay-complete \cite[Definition~3.2.28]{Nies}). Then we compute $n$ such that $l_n=\frac{l_k}{c-1}$. One has $\Omega \le l_i \Leftrightarrow l_n \le \Omega$.

     Finally, let $Y$ be the SFT which simulates a machine (as in Figure \ref{fig:computationtiles}) performing the following computation: given a rational number $q$ as input, compute successive approximations $\Omega(n)$ of $\Omega$ and checks that for all $n$, $\Omega(n) < q$, halting if this is not the case. Furthermore, the halting symbol is forbidden. Hence, an input $q$ is in $\Le(Y)$ if and only if it is a rational number such that $\Omega \le q$.

    We prove that $X=Y$ is $\Pi^0_2$-hard reducing the set $A$. For any $i \in \N$, let $X_i$ be the subshift of $Y$ that forbids finite inputs $q$ such that $q < l_i$. $X_i$ is effective since checking if $q<l_i$ is $\Sigma^0_1$. If $i \in A$, then every $q < l_i$ is not in $\Le(Y)$ and so $X_i=Y$. If $i \notin A$, then there exists $q$ such that $\Omega < q < l_i$, which may appear as input in $Y$ but is forbidden in $X_i$; hence $X_i \subsetneq Y$.
\end{proof}

\section{The embedding problem $\mathbf{Y\hookrightarrow X}$}\label{sec:Y-embeds-X}

We are mainly interested in the case of SFT inputs, as $Y \hookrightarrow X$ has a nontrivial boundary between the decidable and undecidable cases (with an upper bound of $\Sigma^0_1$ by Lemma \ref{upper-bound-embeddings}), while it is always undecidable ($\Pi^0_1$-hard) for effective inputs by Theorem \ref{rice-theorem-for-effective}. We are not able to characterize which properties of $Y$ make this problem decidable, but exhibit sufficient conditions for decidability and undecidability. 

We start with sufficient conditions for $Y \hookrightarrow X$ to be undecidable. 
\begin{theorem}\label{eventuallyperiodic}
Assume that $Y$ contains a configuration that is not strongly periodic, but is strongly periodic outside of a finite region (i.e. coincides with a strongly periodic configuration except on a finite set). Then $Y \hookrightarrow X$ for SFT inputs is $\Sigma^0_1$-hard.
\end{theorem}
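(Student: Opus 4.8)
The plan is to combine a structural necessary condition for embeddings with a reduction from \textbf{Halt}; I would carry out the reduction in dimension $d\geq 2$, where $\Z^d$-SFTs simulate Turing machines via \Cref{fig:computationtiles}.

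\emph{A necessary condition.} Let $y\in Y$ be the hypothesized configuration, agreeing with a strongly periodic $z$ outside a finite region $R$ but not itself strongly periodic. Since $Y$ is closed and shift-invariant, pushing the defect of $y$ to infinity (along a coset of the period lattice of $z$) shows $z\in Y$. Suppose $F\colon Y\to X$ is an embedding, given as a sliding block code of radius $r$. First I would check that $F(z)$ is strongly periodic, because a sliding block code sends a configuration with finite orbit to one with finite orbit. Next, $F(y)$ agrees with $F(z)$ outside the finite region $R+\{-r,\dots,r\}^d$, since $F(y)(u)=F(z)(u)$ whenever $u+\{-r,\dots,r\}^d$ avoids $R$. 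Finally, $F(y)$ is \emph{not} strongly periodic: the orbit $\{\sigma^u y:u\in\Z^d\}$ is infinite, and injectivity of $F$ forces $\{\sigma^u F(y):u\in\Z^d\}=\{F(\sigma^u y):u\in\Z^d\}$ to be infinite as well. Thus any SFT $X$ into which $Y$ embeds must itself contain a configuration that is not strongly periodic but is strongly periodic outside a finite region; I will call such a configuration an \emph{isolated defect}. Contrapositively, an SFT with no isolated defect is a ``no'' instance.

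\emph{The reduction.} Given a machine $M$, I would construct an SFT $X_M$, computably in $M$, so that $M$ halts iff $Y\hookrightarrow X_M$. The backward direction is handed to me by the necessary condition: I design $X_M$ so that if $M$ does not halt, every configuration of $X_M$ is either strongly periodic or not eventually periodic, hence $X_M$ has no isolated defect and $Y\not\hookrightarrow X_M$. Concretely, a configuration of $X_M$ should be the periodic background $z$ everywhere except inside ``defect regions'', and the local rules admit a finite defect region only if it encloses a space-time diagram of $M$ reaching a halting state; if $M$ loops, no defect region can be closed off in a finite window, so any deviation from the background is forced to be infinite and non-periodic.

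\emph{The forward direction and the main obstacle.} When $M$ halts I must exhibit an actual embedding $F\colon Y\to X_M$. This is easy in the favourable case where embedding $Y$ is \emph{equivalent} to $X_M$ containing an isolated defect — for example when $Y$ is the sunny-side-up subshift, where the radius-$r$ code sending the background to $z$ and the lone defect of $y$ to a halting-computation-gated defect is injective and shift-commuting, so that a single finite halting computation suffices. The difficulty, which I expect to be the technical heart, is that a general $Y$ contains much more than its isolated defects (aperiodic configurations, arbitrarily many simultaneous defects, and so on), so $X_M$ must contain a genuinely shift-invariant copy of all of $Y$, unlocked exactly by halting. A single finite halting certificate cannot gate such a copy: by closedness of $X_M$, translating the certificate to infinity would leave an un-gated copy of $Y$ inside $X_M$, producing an embedding even when $M$ loops. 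I would therefore gate the copy of $Y$ through a self-similar (hierarchical) construction in the spirit of the simulation theorems \cite{aubrun2013simulation,durand_effective_2010}, where halting is witnessed at arbitrarily large scales so that the certificate cannot escape, while an ``active'' layer is free to reproduce $Y$. Checking that this single construction simultaneously produces a true embedding when $M$ halts and destroys every isolated defect when $M$ loops is the crux of the proof.
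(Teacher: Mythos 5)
Your necessary condition is correct, and it coincides with the first half of the paper's own argument: any embedding must send the hypothesized configuration $y_p$ to a configuration of $X$ that is strongly periodic outside a finite region but not strongly periodic, so an SFT with no such ``isolated defect'' is a no-instance. The genuine gap is exactly where you locate it — the forward direction — and your proposed repair cannot work. Your design for $X_M$ (periodic background $z$, with finite defect regions permitted only when they enclose a halting space-time diagram) makes the non-halting case easy, but it also makes the halting case impossible for general $Y$: even when $M$ halts, your $X_M$ contains only background-plus-halting-defect configurations, and a general parameter $Y$ satisfying the hypothesis (for instance the full shift $\A^{\Z^2}$, which does contain an isolated-defect configuration) cannot embed into it — already for entropy reasons, since a sparse-defect SFT with defect blocks of size $s$ has entropy $O(s^{-2})$, below $\log|\A|$. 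Your pivot to a hierarchical simulation in the spirit of \cite{aubrun2013simulation,durand_effective_2010} does not rescue this: the theorem places \emph{no} computability assumption on $Y$ (the paper stresses after its proof that only the presence of a single configuration is used), whereas those simulation theorems apply only to effective subshifts; an ``active layer'' of an SFT computed from $M$ cannot be made to range over the configurations of a non-effective $Y$. So the ``crux'' you defer is not a technical verification but an obstruction to the strategy itself.

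The paper sidesteps the gating problem you identified rather than solving it: it never restricts which configurations $X_M$ contains, only how locally they can be produced. In the paper's $X_M$, the first layer is a completely unconstrained full shift on the alphabet $\A$ of $Y$, so every configuration of $Y$ fits on that layer whether or not $M$ halts; there is one auxiliary layer per $n\times n$ pattern $p_i$ over $\A$, and each occurrence of $p_i$ on the first layer triggers a simulation of $M$ (as in \Cref{fig:computationtiles}) on layer $i$, truncated if $p_i$ reoccurs in the computation zone. What halting gates is the \emph{radius} of the embedding: if $M$ halts in $t$ steps, all computations are finite, so the map $y\mapsto x_y$ (first layer equal to $y$, auxiliary layers filled in deterministically) is a sliding block code of radius $O(tn)$, and it is injective for free because the first layer is carried along unchanged. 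If $M$ does not halt, your necessary condition plus the structure of $X_M$ finishes the argument: the image of $y_p$ would be eventually periodic but not strongly periodic, its first layer then contains a pattern occurring only near the defect, the rightmost uppermost occurrence of that pattern starts a computation that is never truncated, and eventual periodicity of the image confines that computation to a finite region — i.e.\ $M$ halts, a contradiction. The lesson to take away is that injectivity should be obtained by transporting the input configuration verbatim, and the undecidability should be encoded in the locality needed for the embedding, not in which configurations the target SFT contains.
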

\begin{proof}
We reduce the problem \textbf{Halt}. Let $M$ be a Turing machine given as input. Let $y_p \in Y$ be the configuration given by the hypothesis: there is a finite set $C$ such that $y_p$ is strongly periodic outside $C$ with period $p \in \A^{n \times n}$.

We define $X$ on the alphabet $\A'=\A \times T^{|\A|^{n^2}}$ where $T$ is the set of tiles corresponding to the Turing Machine $M$ as described in Figure \ref{fig:computationtiles}. The rules of adjacency of $T$ are slightly modified as follows. Given a configuration $x\in\A'^{\Z^2}$, its $i$-th \emph{layer} is its projection on the $i$-th coordinate. $X$ follows these rules:

\begin{itemize}	
    \item Patterns in $\A^{n \times n}$ are numbered $p_1, \dots, p_{|\A|^{n^2}}$.
	\item When a pattern $p_i\in \A^{n \times n}$ appears on the first layer with support $\llbracket a,a+n\llbracket\times \llbracket b,b+n\llbracket$, the layer $i+1$ must contain an initial tile (top right of Figure \ref{fig:computationtiles}) at position $(a,b)$, triggering the start of a computation of $M$ on empty input.
    The computation is halted if another occurrence of $p_i$ appears in the computing area.
\end{itemize}

Any configuration $y \in \A^{\Z^2}$ can be extended to a configuration $x_y \in X$ that coincides with $y$ on the first layer, by starting computations wherever required and, on each extra layer, leaving gray all cells with no computation. Any configuration of $X$ is obtained in this manner with possible additional computations with no initial tile (coming from infinity).

We first assume that $M$ halts, and prove that $Y \hookrightarrow X$. Define $f:Y\rightarrow X$ by $f(y)=x_y$ which is clearly injective. This function is local: if $M$ halts in $t$ steps, the function $f$ can determine which computation occurs on every layer from the value of $y$ in a $2t \times 2t$ square around each position. 

We now assume that $Y \hookrightarrow X$ and prove that $M$ halts. Let $f:Y\rightarrow X$ be local and injective. Because $f$ is local, $f(y_p)$ is strongly periodic of periods $(n,0)$ and $(0,n)$ when far away from $C$; and because $f$ is injective, $f(y_p)$ is not strongly periodic. Note that if the first layer of $f(y_p)$ was strongly periodic, then $f(y_p)$ would be either strongly periodic or not even strongly periodic outside of a finite region (because of computations coming from infinity); therefore the first layer of $f(y_p)$ cannot be strongly periodic. Therefore there is some pattern in the first layer that occurs in $C$ and not in the strongly periodic region. This pattern has a rightmost uppermost occurrence that triggers a computation which is never truncated. Since $f(y_p)$ is strongly periodic away from $C$, this computation must halt.
\end{proof}

It is remarkable that the previous result makes no use of any global property of $Y$, but only of the presence of a single configuration. However, this periodic structure is not needed, as the following example has no such configuation.

\begin{proposition}
    The problem $Rob \hookrightarrow X$ for SFT inputs is $\Sigma^0_1$-complete, where $Rob$ is the Robinson SFT without computation (see \cite{robinson_undecidability_1971}).
\end{proposition}

\begin{proof}
    Let $Rob^+$ be the Robinson SFT with a second layer on alphabet $\{\textrm{red}, \textrm{blue}\}$. Each computation macrotile is cut diagonally into a northwest red half and a southeast blue half; it is possible to do this in an SFT manner, by checking that each southwest and northeast corner are the ends of a red/blue diagonal border, and because the diagonal of a macrotile crosses sub-tiles along the diagonal. Clearly there is no morphism $Rob \to Rob^+$ since, for any $x\in Rob$ and $r\in \N$, you can find positions $i,j\in \Z^2$ such that $x_{i+\llbracket -r,r \rrbracket^2} = x_{j+\llbracket -r,r \rrbracket^2}$, but $i$ and $j$ are in the northwest, resp. southeast, halves of their macrotiles.

    We reduce the problem $\textbf{Halt}$. Given a Turing machine $M$ as input, let $X_M$ be the Robinson SFT in which we simulate $M$ as in \cite{robinson_undecidability_1971}, except that the computation is allowed to halt and the computation zone of a macrotile is entirely red if the computation halts in the macrotile, halved in a blue and a red half as in $Rob^+$ otherwise.
    
    Suppose that $M$ halts in $t$ steps. Define an embedding $Rob \hookrightarrow X_M$ that, to each $x\in Rob$, adds the first $t$ steps of computation and the corresponding red/blue layer, which can be done by a local function of exponential radius in $t$. Conversely, if $M$ does not halt, there is a morphism $X_M \to Rob^{+}$ that removes the computation, so $Rob$ cannot embed in $X_M$.
\end{proof}

 For the next result, we recall that a subshift is finite if and only if all configurations are strongly periodic \cite[Theorem 3.8]{ballier2008structural}.

\begin{proposition}\label{embedding-decidable-finite-subshifts}
Let $Y$ be a finite $\mathbb{Z}^d$-subshift ($d\geq 1$). Then $Y \hookrightarrow X$ for SFT inputs is decidable.
\end{proposition}
\begin{proof}
Since $Y$ is finite, we find a common set of linearly independent periods $v_1,\dots,v_d\in\mathbb{Z}^d$ for all configurations; that is, $\sigma^v(y)=y$ for every $v\in \{ v_1,\dots,v_d\}$ and all $y\in Y$. Given as input an SFT $X$, we add finitely many forbidden patterns to build the (finite) SFT 
\[X_0=\{x\in X : \sigma^{v_1}(x)=x,\dots,\sigma^{v_d}(x)=x\}.\]Then $Y$ embeds into $X$ if and only if $Y$ embeds into $X_0$ since the image of a configuration of period $v$ also has period $v$, and the last question is decidable as both systems are finite.
\end{proof}

We might expect that $Y \hookrightarrow X$ is always more difficult than the corresponding inclusion problem $Y \subseteq X$, that is, of $\Le(Y)$ (see Theorem \ref{characterization-complexity-inclusion}). However, this is not the case.

\begin{theorem}\label{answer1}
There exists an effective $\Z$-subshift $Y$ such that $Y \hookrightarrow X$ is decidable for SFT inputs but $\Le(Y)$ is not computable (and therefore $Y\subseteq X$ for SFT inputs is not decidable).
\end{theorem}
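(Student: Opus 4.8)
The plan is to construct an effective $\Z$-subshift $Y$ for which the embedding problem $Y\hookrightarrow X$ collapses to a trivially decidable condition, while simultaneously encoding undecidable information in the language $\Le(Y)$. The natural strategy is to make $Y$ have some rigid dynamical obstruction to embedding---an invariant like topological entropy, or the existence of a special periodic configuration---that forces $Y\hookrightarrow X$ to reduce to checking a single decidable property of the SFT $X$, independently of the finer structure of $\Le(Y)$. The guiding intuition from \Cref{eventuallyperiodic} and \Cref{embedding-decidable-finite-subshifts} is that the embedding problem is governed by coarse features (periodicity, finiteness, entropy), whereas $\Le(Y)$ can secretly carry a hard $\Pi_1^0$ set.

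Concretely, I would take $Y$ to live on a binary alphabet and arrange it so that $\Le(Y)$ encodes a noncomputable $\Pi_1^0$ set $S$ in the style of the subshift $X_S$ from the proof of \Cref{prop:alldegree}: forbid words of the form $01^n0$ exactly when $n\notin S$, so that $\Le(Y)$ is Turing-equivalent to $S$ and hence noncomputable. The key is then to superimpose on $Y$ enough uniform structure that embedding into any SFT $X$ is equivalent to a decidable test. For instance, I would try to ensure that $Y$ has a fixed positive topological entropy $h_{top}(Y)=h$, or contains a distinguished full-shift-like factor, so that $Y\hookrightarrow X$ forces $X$ to satisfy a computable lower bound on $\Le_n(X)$; combined with a matching upper construction, the embedding would exist if and only if $X$ clears an entropy or counting threshold that can be checked from the finite list of forbidden patterns defining $X$. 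The embedding itself, when it exists, should be realizable by a universal block code that does not depend on the detailed membership questions of $S$.

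The main steps, in order, are: (1) fix the effective subshift $Y$ encoding $S$ in $\Le(Y)$ and verify $\Le(Y)\equiv_T S$ is noncomputable, which by \Cref{characterization-complexity-inclusion} immediately gives that $Y\subseteq X$ is undecidable; (2) identify the decidable invariant (entropy, a periodic point structure, or a finite-subsystem criterion) that characterizes when $Y\hookrightarrow X$ holds; (3) prove the forward direction, that whenever $X$ satisfies this invariant there is an explicit embedding $Y\hookrightarrow X$, ideally by a construction uniform in $X$ and insensitive to $S$; and (4) prove the converse, that the invariant is necessary for an embedding, using monotonicity of the relevant dynamical quantity under embeddings (for entropy, $h_{top}(Y)\le h_{top}(X)$ whenever $Y\hookrightarrow X$). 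Step (2) and the uniformity claim in step (3) are where I expect the real difficulty: I must design $Y$ so that its \emph{embeddability} is genuinely decoupled from the noncomputable data stored in $\Le(Y)$, which is delicate because embeddings are sensitive to the language. The cleanest route may be to make every configuration of $Y$ eventually constant or periodic in a controlled way, so that the embedding question reduces to finite combinatorial matching against $X$, while the noncomputability hides in which short patterns actually appear.

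The subtle obstacle is that $\Le(Y)$ noncomputable typically \emph{does} influence $Y\hookrightarrow X$, so the construction must carefully separate ``which patterns appear'' from ``what is needed to embed.'' One clean way to force this separation is to make $Y$ a subshift whose only possible embedding targets are determined by a coarse feature all its configurations share (e.g. $Y$ consists of configurations that are all shifts of a single quasiperiodic skeleton decorated by blocks whose presence encodes $S$), so that a local rule can always map $Y$ into $X$ whenever $X$ admits the skeleton, regardless of which decorations are actually present. I would verify decidability of the skeleton-admission test against a finite SFT presentation, and verify injectivity of the proposed code by construction. If the entropy approach proves too rigid, the fallback is a direct argument showing that $Y\hookrightarrow X$ is equivalent to the existence of a morphism of a fixed finite building block into $X$, a $\Sigma_1^0$ condition that for this particular $Y$ collapses to a decidable one.
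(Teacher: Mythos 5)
There is a genuine gap: what you have written is a strategy outline, not a proof, and the two steps you yourself flag as ``the real difficulty'' (the decidable characterization of $Y\hookrightarrow X$ and the uniformity of the embedding) are precisely the content of the theorem. Moreover, your primary concrete route would fail. First, the subshift you propose, namely the $X_S$-style subshift forbidding only $\{01^n0 : n\notin S\}$ as in \Cref{prop:alldegree}, allows configurations with arbitrarily many blocks of $1$s; for such a $Y$ no decidable embedding criterion is established (nor is one easy to come by), because an injective sliding block code must handle all tuples of block lengths and positions, and windows containing $1$s on both sides of a central $0$ prevent the kind of rigid local rule you would need. Second, the entropy-threshold mechanism cannot do the work you assign to it: entropy monotonicity is necessary but far from sufficient for embedding into an \emph{arbitrary} SFT input (Krieger's embedding theorem requires the target to be mixing, strict inequality, and periodic-point conditions), and for a $Y$ whose language encodes a $\Pi^0_1$-complete set the entropy $h_{top}(Y)$ is in general only approximable from above, not computable, so ``checking the threshold'' against $h_{top}(X)$ is itself not a decidable test. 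Your fallback remark (make every configuration eventually constant, reduce embedding to matching a finite building block against $X$) is indeed the right idea, but it is never executed.

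For comparison, the paper's proof makes exactly this fallback work by adding one crucial family of forbidden words. It takes $Y\subseteq\{0,1\}^\Z$ with forbidden patterns $\{10^n1 : n\in\N\}\cup\{01^n0 : n\notin E\}$ for a $\Pi^0_1$-complete set $E$: the first family forces \emph{at most one} block of $1$s per configuration, so every configuration is of the form $0^\Z$, $1^\Z$, ${}^{\omega}01^{\omega}$, ${}^{\omega}10^{\omega}$, or ${}^{\omega}01^n0^{\omega}$ with $n\in E$, while $\Le(Y)$ still encodes $E$ via the words $01^n0$. One then proves that $Y\hookrightarrow X$ if and only if there exist letters $a\neq b$ and words $v,w$ with ${}^{\omega}avb^*wa^{\omega}\subseteq X$: the forward direction uses a pigeonhole over the finitely many transition words around the $1$-block together with SFT pumping (the SFT cannot distinguish block lengths, so embeddability is independent of which $n$ lie in $E$); the converse direction writes down an explicit injective local rule sending the block $1^n$ to $b^n$ flanked by $v$ and $w$, which is well defined only because no window of $\Le(Y)$ sees $1$s on both sides of a central $0$. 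Decidability then follows by searching for such walks in the labelled-graph presentation of the $\Z$-SFT $X$. Without the single-block constraint and this characterization, your construction does not establish decidability of $Y\hookrightarrow X$, which is the heart of the statement; the undecidability of $Y\subseteq X$ via \Cref{characterization-complexity-inclusion} is the easy half and is the only part your proposal actually secures.
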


\begin{proof}
Let $E \subseteq \N$ be a $\Pi^0_1$-complete set. Let $Y$ be the $\Z$-subshift on alphabet $\A=\{0,1\}$ with forbidden patterns $\F=\{10^n1 : n\in\N\} \cup \{01^n0 \ : \ n \notin E\}$. $Y$ is effective since $E \in \Pi^0_1$, and $\Le(Y)$ is $\Pi^0_1$-complete. We claim that for any SFT $X$ on alphabet $\A_X$, $Y \hookrightarrow X$ if and only if there exists $a,b\in \A_X$ such that $a \neq b$ and two words $v, w \in \A_X^*$ such that the set ${}^{\omega}avb^*wa^{\omega}$ is included in $X$. This is decidable by building a labelled graph $A_X$ such that $x$ labels a walk in $A_X$ if and only if $x \in X$ (see \cite[Theorem 2.3.2]{lind_introduction_1995}), and checking if there are walks labeled ${}^{\omega}avb^*wa^{\omega}$ in $A_X$.

Let us prove the claim. Let $X$ be such that $Y \hookrightarrow X$ and $f: Y \rightarrow X$ be the corresponding embedding of radius $r$. Let $a=f(0^{2r+1})$ and $b=f(1^{2r+1})$ (where, by abuse of notation, we also denote by $f$ the local function). Since $f$ is injective, $a \neq b$. Consider configurations $x$ of the form ${}^{\omega}01^n0^{\omega}$ for $n \in E$. Since $x \in Y$, $f(x) \in X$ and $f(x)$ is of the form ${}^{\omega}av_nb^{n-2r}w_na^{\omega}$ for $n$ large enough, for some $v_n,w_n\in\A^{2r+1}$. Take $(v,w)$ such that $(v,w) = (v_n,w_n)$ for infinitely many $n$. The SFT $X$ can be seen as the set of infinite walks on $A_X$, so that ${}^{\omega}avb^*wa^{\omega} \in X$ by the pumping lemma.

For the other direction, let $a, b, v$ and $w$ which verify the property. Suppose without loss of generality that $|v|=|w|=r$. Define a local map $f:Y \rightarrow X$ of radius $r$ by: 
\[\forall p \in \Le_{2r+1}(Y),\quad f(p) = 
\begin{cases}
    a \text{ if } p=0^{2r+1}\\
    b \text{ if } p_0=1\\
    v(r+1-k)\text{ if }p \in 0^{r+k} 1\{0,1\}^{r-k}\text{ for }k>0\\
    w(k)\text{ if }p \in \{0,1\}^{r-k}10^{r+k}\text{ for }k>0\\
\end{cases}\]

We have $f(Y) \subseteq {}^{\omega}avb^*wa^{\omega} \subseteq X$, and $f$ is injective, so $f:Y \rightarrow X$ is an embedding.
\end{proof}
\begin{corollary}
    There is an effective $\Zd$-subshift $Y$ such that $Y\hookrightarrow X$ is decidable for SFT inputs but $\Le(Y)$ is not computable $(d\geq 1)$. 
\end{corollary}
\begin{proof}
This follows from Theorem \ref{answer1} and Proposition \ref{embedding-of-lift-sfts-is-decidable}.
\end{proof}
We do not know if a similar result applies when $Y$ is an SFT. Since our hardness result (Theorem \ref{eventuallyperiodic}) only applies to subshifts that are not minimal, it is natural to ask:

\begin{question}If $Y$ is an SFT such that $Y \hookrightarrow X$ is decidable for SFT inputs, is $\Le(Y)$ always computable?
\end{question}

The next result gives a positive answer in a restricted case. An example (from \cite{pavlov_structure_2023}) of SFT conjugate to a proper subsystem is the set of non-decreasing sequences in $\{0,1,2\}^\Z$.

\begin{proposition}
    If $Y$ is an SFT which is not conjugate to any proper subsystem, then $\Le^c(Y)$ is many-one reducible to $Y \hookrightarrow X$ for SFT inputs.
\end{proposition}  

\begin{proof}
    Let $\mathcal F$ be a finite defining set of forbidden patterns for $Y$. Then we can determine if a pattern $p$ belongs to $\Le(Y)^c$ by checking if $Y$ embeds in the SFT with same alphabet and set of forbidden patterns $\mathcal F\cup\{p\}$.
\end{proof}

\section{The inclusion $\mathbf{X\subseteq Y}$ and embedding $\mathbf{X\hookrightarrow Y}$ problems}\label{sec:X-contained-in-Y-and-X-embeds-Y} 
Our first result completely describes the complexity of $X \subseteq Y$ and $X \hookrightarrow Y$ when the parameter is an arbitrary SFT.

\begin{proposition}
    Let $Y$ be an SFT of dimension $d\geq 2$. Then the problems $X\subseteq Y$ and $X\hookrightarrow Y$ are $\Sigma_1^0$-complete for SFT/effective inputs. 
\end{proposition}

\begin{proof}
    The upper bound follows from Lemma \ref{upper-bound-inclusion}, and the lower bound follows from Corollary \ref{prop:some-berger-properties} and the fact that SFTs are in particular effective. 
\end{proof}
Therefore we shall be interested in taking a parameter subshift which is not an SFT. If the parameter $Y$ is effective then 
it follows from Theorem\ref{upper-bound-inclusion-two-sofic-subshifts} that the maximal complexity we can expect is $\Pi_2^0$ for $X\subseteq Y$ and $\Sigma_3^0$ for $X\hookrightarrow Y$. We show that these bounds are reached.

\begin{theorem}\label{inclusion-non-sft}
    Let $Y$ be an effective subshift which is not an SFT. Then the problem $X\subseteq Y$ on effective inputs is $\Pi_2^0$-complete.
\end{theorem}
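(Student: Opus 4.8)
The plan is to establish $\Pi_2^0$-hardness by reducing from \textbf{Total}, which is $\Pi_2^0$-complete by \Cref{classicalpb}; the upper bound is already given by \Cref{upper-bound-inclusion-two-sofic-subshifts}. The crucial fact I would exploit is that $Y$ is effective but \emph{not} of finite type. This means that for every finite radius $r$, there is some pattern that is locally admissible (does not contain any forbidden pattern of radius $\leq r$) yet is globally forbidden, i.e.\ does not extend to a configuration of $Y$. Equivalently, the ``local approximations'' $Y_r$ (configurations avoiding the forbidden patterns enumerated up to radius/stage $r$) form a strictly decreasing sequence of SFTs whose intersection is $Y$, and no finite stage suffices. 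This gap between local and global admissibility is what allows a machine-halting condition to be encoded as a strict inclusion failing to hold at infinitely many scales.

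The key construction I would carry out is as follows. Given a Turing machine $M$, I want to build an effective SFT input $X_M$ such that $X_M \subseteq Y$ if and only if $M \in \textbf{Total}$. Since $Y$ is not SFT, I can produce, computably from the enumeration of $\F(Y)$, an infinite sequence of patterns $p_1, p_2, \dots$ that are locally consistent but forbidden in $Y$, detected at successive stages $n_1 < n_2 < \dots$. The idea is to let $X_M$ be a subshift whose configurations may only contain the ``bad'' pattern $p_k$ if $M$ halts on input $k$ (or on all inputs below some bound tied to $k$); concretely, $X_M$ is defined by forbidding $p_k$ \emph{unless} a halting computation of $M(k)$ has been witnessed, which is a $\Sigma_1^0$ condition and hence keeps $X_M$ effective. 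Then $X_M \subseteq Y$ forces that no $p_k$ ever appears, which happens exactly when $M(k)$ halts for every $k$ — i.e.\ $M$ is total.

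I would organise the steps in this order: (1) verify the characterisation of non-SFT effective subshifts via a strictly decreasing chain of approximating SFTs, extracting a computable supply of globally-forbidden-but-locally-admissible patterns; (2) fix the reduction machinery, defining $X_M$ as an effective subshift by a $\Sigma_1^0$ forbidden set that couples appearance of each $p_k$ to halting of $M(k)$, and check $X_M$ is genuinely effective and that the map $M \mapsto X_M$ is computable; (3) prove the two directions of the equivalence $X_M \subseteq Y \iff M \in \textbf{Total}$, using that $p_k \notin \Le(Y)$ for the forward direction and that a non-halting $M(k)$ lets $p_k$ persist in some configuration of $X_M \setminus Y$ for the converse; (4) invoke the upper bound from \Cref{upper-bound-inclusion-two-sofic-subshifts} to conclude completeness.

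The main obstacle I anticipate is step (1) together with the delicate part of step (3): I must ensure that each extracted pattern $p_k$ not only fails to belong to $\Le(Y)$ but that the configuration of $X_M$ realising $p_k$ (when $M(k)$ does not halt) genuinely lies \emph{outside} $Y$ while still lying \emph{inside} $X_M$. This requires that $X_M$ contains enough configurations — I should likely build $X_M$ over $Y$'s alphabet so that $X_M \subseteq Y$ can actually hold when all conditions are met, yet admit configurations carrying a forbidden $p_k$ when a condition fails. Controlling this inclusion uniformly across all scales, so that \emph{no single} forbidden pattern is missed and the coupling to the (unbounded) quantifier ``for all $k$'' is faithful, is the technical heart of the argument.
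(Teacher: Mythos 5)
Your high-level plan (reduce from \textbf{Total}, exploit that $Y$ is not defined by any finite sub-family of forbidden patterns) matches the paper's, but the concrete construction has two flaws, one of which is fatal to the architecture you chose.

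First, effectiveness and orientation. Your $X_M$ forbids $p_k$ \emph{unless} a halting computation of $M(k)$ is witnessed, so its forbidden set is $\{p_k : M(k)\uparrow\}$, which is $\Pi_1^0$, not $\Sigma_1^0$: in an effective presentation a pattern can only \emph{become} forbidden as the enumeration proceeds, it cannot be removed when halting is later observed. So $X_M$ is not effectively presented and $M\mapsto X_M$ is not computable. Moreover, with this coupling your own equivalence is inverted: $p_k$ may appear exactly when $M(k)$ halts, so ``no $p_k$ ever appears'' corresponds to $M$ halting nowhere, not to $M$ being total.

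Second, and this is the deeper problem that you flag as ``the technical heart'' but do not resolve: even after reorienting the coupling into the computable direction (forbid $p_k$ once $M(k)$ halts), the ``yes'' direction fails. Your $p_k$ are a \emph{selected} sequence of locally admissible patterns outside $\Le(Y)$; forbidding all of them (when $M$ is total) does not force $X_M\subseteq Y$, because $X_M$ still contains configurations violating $Y$'s constraints in ways unrelated to the chosen $p_k$. For instance, if $Y$ is the sunny-side-up shift, forbidding the patterns $110^r11$ for every $r$ still allows configurations with two isolated $1$'s. To guarantee containment, the forbidden set of $X_M$ must exhaust a \emph{defining} set of $Y$, and this is exactly what the paper does: taking $\F=\{p_1,p_2,\dots\}$ to be a recursively enumerable defining set for $Y$, it forbids $p_n$ in $X_M$ once $M'(n)$ halts, where $M'(n)$ halts if and only if $M$ halts on all inputs $0,\dots,n$. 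This monotone modification of $M$ is the key trick your sketch lacks: if $M$ is total then all of $\F$ is forbidden and $X_M=Y$; if $M$ is not total then $M'$ halts on only \emph{finitely} many inputs, so $X_M$ is an SFT containing $Y$, hence properly containing $Y$ precisely because $Y$ is not an SFT. Note the subtlety this handles: if $M$ is not total but halts on infinitely many inputs, the corresponding infinite proper subset of $\F$ could still define exactly $Y$, so finiteness in the ``no'' case, supplied by $M'$, is what makes the non-SFT hypothesis usable at all.
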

\begin{proof}
    Let $\A$ be the alphabet for $Y$, and let $\mathcal F=\{p_1,\dots\}$ be a recursively enumerable set of forbidden patterns that defines $Y$. Since $Y$ is not an SFT by hypothesis, if we forbid finitely many patterns of $\mathcal F$, we obtain an SFT which properly contains $Y$. 

    We reduce the problem \textbf{Total}. Given a Turing machine $M$, we describe a new Turing machine  $M'$ as follows. On input $n\in\N$, $M'$ simulates $M$ on inputs $0,\dots,n$, and $M'$ halts if and only if $M$ halts on every such input. It is clear that $M\in \textbf{Total}$ if and only if $M'\in \textbf{Total}$. It is also clear that whenever $M\not\in\textbf{Total}$, the set $\{n\in\N : M'(n)\downarrow\}$ is finite. 

    Given a Turing machine $M$, consider the effective subshift $X_M$ on alphabet $\A$ defined by the forbidden patterns $\{p_n : M'(n)\downarrow\}$. When $M\in\textbf{Total}$ then $\{n\in\N : M'(n)\downarrow\}=\N$, so all patterns $p_n$ are forbidden and therefore $X_M=Y$. When $M\not\in\textbf{Total}$ then $\{n\in\N : M'(n)\downarrow\}$ is finite. This implies that $X_M$ is an SFT and in particular $Y\subsetneq X_M$. 
\end{proof}

The maximal complexity for inclusion can be achieved even for SFT inputs. Notice that the same problem is $\Pi^0_1$ in dimension $1$ where SFTs have computable language.

\begin{theorem}\label{thm:embed1}
In dimension $d\geq 2$, there exists an effective subshift $Y$ such that the problem $X\subseteq Y$ on SFT inputs is $\Pi^0_2$-complete.
\end{theorem}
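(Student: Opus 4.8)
The plan is to reduce the $\Pi_2^0$-complete problem \textbf{Total} (\Cref{classicalpb}) to the inclusion $X\subseteq Y$ for SFT inputs, the matching upper bound $\Pi_2^0$ being already supplied by \Cref{upper-bound-inclusion-two-sofic-subshifts} (an SFT is in particular effective). The whole difficulty is that $Y$ must be a single, $M$-independent subshift, so the machine $M$ cannot be hard-coded into $Y$; instead I will push the computation into the input SFT $X_M$ and let $Y$ be governed by a fixed universal machine $U$. I work in dimension $d=2$; the general case $d\geq 2$ follows by adding $d-2$ inert coordinates, i.e.\ forcing configurations to be constant along the extra directions, which is an SFT constraint reducing the $d$-dimensional problem to the planar one.

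Define $Y$ as follows. Fix a universal machine $U$ and take the computation-simulating SFT of \Cref{fig:computationtiles} for $U$, where the bottom row of a computation zone carries an initial tape content of the form $\langle P,m\rangle$ (a program word $P$ and an input word $m$), augmented with an extra ``non-halting arrow'' tile that is created at the initial tile of a computation, propagates upwards along the time direction, and is incompatible with the halting symbol. Let $\mathcal G$ be the (decidable, hence $\Sigma_1^0$) set of all finite patterns encoding the start of such a computation together with its arrow, for an arbitrary program and input, and set $Y=X_{\mathcal G}$; this is an effective subshift (and not an SFT). For a Turing machine $M$, let $X_M$ be the SFT over the same alphabet that simulates $U$, allows the arrow tiles, and whose finitely many additional local rules force the program word in every computation zone to spell the fixed code of $M$. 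The map $M\mapsto X_M$ is computable and each $X_M$ is genuinely of finite type, since $M$ is fixed.

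The key point is the equivalence: writing $p_n\in\mathcal G$ for the start pattern with program $M$, input $n$, and arrow, one has $p_n\in\Le(X_M)$ if and only if $M(n)\uparrow$. Indeed, in any configuration containing $p_n$ the deterministic computation of $U$ on $\langle M,n\rangle$ is forced, the arrow must persist through every cell above the start, and it collides with the halting symbol exactly when the computation halts; hence $p_n$ extends to a full configuration precisely when this computation never halts. Since $X_M$ forces the program to be $M$, any pattern of $\mathcal G$ lying in $\Le(X_M)$ must carry program $M$ and therefore equals some $p_n$, so $\mathcal G\cap\Le(X_M)=\{p_n : M(n)\uparrow\}$. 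As $X_M\subseteq Y$ holds iff no forbidden pattern of $Y$ occurs in $X_M$, that is $\mathcal G\cap\Le(X_M)=\emptyset$, we conclude $X_M\subseteq Y$ iff $M(n)\downarrow$ for all $n$, i.e.\ iff $M\in\textbf{Total}$; this yields $\Pi_2^0$-hardness, and with the upper bound the completeness.

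The main obstacle is precisely this decoupling of $Y$ from $M$ together with a faithful realisation of the quantifier alternation of \textbf{Total}: the universal outer quantifier in ``$X_M\subseteq Y$'' supplies the $\forall n$, while the existential ``$\exists t$, $M$ halts on $n$ by time $t$'' must be extracted from the $\Pi_1^0$ condition ``$p_n$ is extendable in the SFT $X_M$''. The arrow gadget is what converts ``the computation halts'' into ``$p_n$ fails to extend'', and forcing the program to equal $M$ is what prevents spurious non-halting computations of other programs from making $X_M\not\subseteq Y$ unconditionally. One should also check that stray computations ``coming from infinity'' (with no start tile) do not affect whether $p_n\in\Le(X_M)$, which holds because $\Le(X_M)$ only records patterns appearing in \emph{some} configuration.
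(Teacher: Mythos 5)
Your overall strategy is the same as the paper's: fix a universal machine $U$, let $Y$ be the computation subshift from which all computations on complete finite inputs are excluded, let $X_M$ be the SFT that forces the program to be (the code of) $M$, and observe that $X_M \subseteq Y$ holds iff every finite input halts, i.e.\ iff $M \in \textbf{Total}$, with the $\Pi^0_2$ upper bound supplied by \Cref{upper-bound-inclusion-two-sofic-subshifts}. The paper implements ``finite-input computations cannot occur in $Y$'' by forbidding the halting symbol of $U$ outright in the ambient SFT $Z$, and then taking $Y \subseteq Z$ to be the union of the two SFTs in which the input has no right end, resp.\ no left end; $X_M$ forces the input to have the form $M\#x$.

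However, your replacement for ``forbid the halting symbol'' --- the non-halting arrow --- does not work as described, and it is exactly the step on which your key equivalence rests. The arrow is created at the initial tile and propagates \emph{upwards along the time direction}, so it occupies the column of the initial tile; but the halting symbol appears at the position of the head at halting time, which after an unbounded computation is in general a different column. A local incompatibility between the arrow tile and the halting tile is then never violated: a configuration containing $p_n$, the halting of $M(n)$ away from the arrow's column, and whatever the tile set permits above a halting symbol, is perfectly valid. Hence $p_n \in \Le(X_M)$ can hold even when $M(n)\downarrow$, the identity $\mathcal G \cap \Le(X_M) = \{p_n : M(n)\uparrow\}$ fails, and your reduction would declare $X_M \not\subseteq Y$ for machines $M \in \textbf{Total}$. (If you instead intend that no tile can sit above a halting symbol, then halting configurations are impossible everywhere and the arrow is superfluous.) The gap is easy to close: either forbid the halting symbol globally, as the paper does, and drop the arrow entirely; or let the \emph{head} carry an ``I was started'' mark, set at the initial tile and preserved by every transition, and forbid halting states that carry the mark --- a marked head does collide with its own halting symbol. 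With that repair (plus the harmless clarification that $Y$ retains the local rules of the computation SFT in addition to forbidding $\mathcal G$), your argument goes through.
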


\begin{proof}
Let $U$ be a universal Turing machine, that is, $U(M\#x)$ simulates the computation $M(x)$ on a potentially infinite input $x$.

Let $Z$ be a SFT which simulates $U$ as in Figure \ref{fig:computationtiles}: 
if some input on alphabet $\mathcal I = \{0,1,\#\}$ appears, then a computation of $U$ is simulated. The halting symbol of $U$ is forbidden. 
    
Let $Y\subseteq Z$ be the subshift where the input cannot be finite. $Y$ can be written as the union $Y_l \cup Y_r$, where $Y_l$ (resp. $Y_r$) is the SFT where the input must be infinite to the left (resp. right); in other words, the initial tile (resp. the pattern $a\square$) is forbidden for all $a\in \mathcal I$. $Y$ is effective as the union of two SFTs.

We reduce $X\subseteq Y$ to \textbf{Total}. Given a Turing machine $M$, let $X_M$ be the subshift of $Z$ where the input of $U$ is forced to be of the form $M\#x$ (with $x$ potentially infinite). $X_M$ is an SFT.
If $M \in \textbf{Total}$, then $\Le(X_M)$ does not contain any finite input so $X_M \subseteq Y$. Conversely, if $X_M \subseteq Y$ then $\Le(X_M)$ does not contain any finite input and so $M$ halts on every input.
\end{proof}

The next Theorem claims that the upper bound for $X \hookrightarrow Y$ on SFT inputs is reached.

\begin{theorem}
    In dimension $d \ge 2$, there exists an effective subshift $Y$ such that $X \hookrightarrow Y$ on SFT inputs is $\Sigma^0_3$-complete.
\end{theorem}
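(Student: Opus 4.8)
The plan is to exhibit an effective subshift $Y$ and a reduction from a $\Sigma^0_3$-complete problem to $X\hookrightarrow Y$. The natural target is \textbf{COF} (cofiniteness of a Turing machine's language), which by \Cref{classicalpb} is $\Sigma^0_3$-complete, and whose shape---``all but finitely many inputs halt''---matches embeddings well, since an embedding is witnessed by a local rule (a finite object) plus the verification that it is injective, the latter being where unbounded quantification lives. The upper bound $\Sigma^0_3$ is already guaranteed by \Cref{embedding-two-effective}, so only $\Sigma^0_3$-hardness needs proof.

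First I would design $Y$ so that the existence of an embedding $X_M\hookrightarrow Y$ captures a cofiniteness condition on a machine $M$. The strategy, echoing \Cref{thm:embed1}, is to build $Y$ as an effective subshift simulating a universal machine via the computation tiles of \Cref{fig:computationtiles}, with the halting symbol forbidden and with an additional control forbidding finite inputs, so that only ``always-running'' computations survive. The input SFT $X_M$ would fix the simulated machine to be $M$ run on inputs indexed by position, so that the set of admissible configurations of $X_M$ corresponds to the set of inputs on which $M$ does not halt. The key design choice is to arrange the alphabets and local structure so that $X_M$ embeds into $Y$ \emph{iff} $M$ halts on all but finitely many inputs: the finitely many non-halting columns can be absorbed by a local sliding block code of bounded radius, while an infinite family of non-halting columns would force the embedding to distinguish unboundedly many configurations with a fixed-radius local rule, which is impossible.

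The main obstacle I anticipate is controlling injectivity with a \emph{fixed} radius. For inclusion (\Cref{thm:embed1}) one only needs $X_M\subseteq Y$, a $\Pi^0_2$ condition; upgrading to embedding adds the injectivity requirement, and the whole point is that injectivity of a local rule of a \emph{given} radius is a finitary check, so the unbounded content must come entirely from the existential quantifier over radii and from the behavior of $M$ on infinitely many inputs. Concretely, I would need a gadget ensuring that a non-halting computation on input $n$ produces, within $Y$, a configuration that can only be realized as the image of the corresponding $X_M$-configuration when the non-halting inputs are cofinitely trivial---so that a single bounded-radius code suffices exactly in the cofinite case. Getting the reduction to force this equivalence cleanly (both directions: cofinite $\Rightarrow$ embedding exists of some finite radius, and embedding of some radius $\Rightarrow$ cofinite) is the delicate part.

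Concretely, the forward direction should follow the pattern of \Cref{thm:embed1}: if $M\in\textbf{COF}$ with all inputs above some $N$ halting, I would define an embedding whose local rule has radius large enough to ``erase'' or recode the finitely many exceptional columns and otherwise act as the simulation inclusion, checking injectivity by the boundedness of the exceptional region. For the converse, given an embedding of radius $r$, I would argue that if infinitely many inputs fail to halt then two distinct $X_M$-configurations must agree on their image, contradicting injectivity, because a radius-$r$ code cannot separate an unbounded family of non-halting columns that are locally indistinguishable; this uses that $Y$ forbids finite inputs so that non-halting columns survive into $Y$ and must be faithfully encoded. I expect that the careful part is ensuring the local indistinguishability needed for the contradiction while still allowing the finitely-many-exceptions case to go through, and that is where most of the combinatorial work will lie.
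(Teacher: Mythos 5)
Your high-level strategy matches the paper's: reduce from \textbf{COF} (with the upper bound inherited from \Cref{embedding-two-effective}), take $Y$ to be an effective subshift built on the computation tiles of \Cref{fig:computationtiles} with the halting symbol forbidden, and take $X_M$ to be the SFT forcing the simulated machine to be $M$, so that the surviving finite-input configurations correspond to the inputs on which $M$ does not halt. However, there is a genuine gap exactly where you place it yourself: the ``gadget'' that makes an embedding exist precisely in the cofinite case is never constructed, and it is the entire content of the theorem. Moreover, the obstruction you invoke for the non-cofinite direction --- that a radius-$r$ code ``cannot separate an unbounded family of non-halting columns'' or ``cannot distinguish unboundedly many configurations'' --- is not a valid obstruction: injectivity of a sliding block code is not a cardinality issue (the identity is an injective radius-$0$ code on any subshift, however large the family of configurations). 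Without some structural asymmetry between $X_M$ and $Y$, nothing prevents an inclusion-like map from being an embedding for \emph{every} $M$, which would kill the reduction; and with the asymmetry you do impose ($Y$ forbids finite inputs while $X_M$ keeps them), the forward direction becomes the problem: in the cofinite case you must map the exceptional non-halting finite-input configurations of $X_M$ \emph{somewhere} in $Y$, and it is not at all clear that a bounded-radius rule can turn a computation diagram of $M$ on a finite input into a legal configuration of $Y$ (all of whose computations run on infinite inputs) injectively; ``erase or recode'' is not an argument here.

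The paper resolves both issues with one concrete marker gadget. In $Y$, every finite-input configuration carries three vertical lines (black, red, green) at positions $0$, $o(M',n)$ and $2o(M',n)$ from the start of the input, where $o$ is injective and its value is computed by the simulated machine; only the position of the green line is a non-SFT (but effective) constraint. The input $X_M$ is identical except that it has \emph{no} green line, so it is an SFT, and finite-input configurations are otherwise kept inside $Y$. An embedding must therefore \emph{add} the green line at distance $o(M,n)$ beyond the red one. If $M\in\textbf{COF}$, there are finitely many non-halting $n$, hence a uniform bound on $o(M,n)$, and the rule placing the green line symmetrically to the black one across the red one has finite radius and is injective. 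If $M\notin\textbf{COF}$, one first shows (using vertical periodicity plus injectivity) that any embedding maps finite-input configurations to finite-input configurations, and then a pigeonhole on the unbounded $o$-values forces a monochromatic $2r\times 2r$ pattern to be mapped onto a green symbol, producing spurious green lines --- a genuine locality obstruction, of a different nature from the counting argument you proposed. Your plan needs this mechanism (or an equivalent one) to be a proof.
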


\begin{proof}
    Fix the injective function $o: n,m \mapsto 3^n 5^m$.
    Let $Y$ be an SFT that simulates a computation, as in Figure \ref{fig:computationtiles}, of a fixed Turing machine $U$. On an input of the form $M'\#n$ written in unary (except the $\#$ symbol), $U(M'\#n)$ computes $o(M',n)$ (where $M'$ is, by an abuse of notation, the code of the Turing machine), writes its value in unary with a special symbol, and then simulates $M'(n)$. The computation halts if the input is finite and malformed, and the halting symbol is forbidden. We distinguish input configurations (containing an input), finite-input configurations, and inputless configurations.
    All configurations have three other layers (black, red, green) each containing at most one vertical line, and finite-input configurations have one black, red and green line at positions $0, o(M,n)$ and $2o(M,n)$, respectively, from the start of the input. 
    The subshift $Y$ is effective; unicity of each line is enforced by "left" and "right" symbols, positions of the black and red line are enforced in an SFT manner (using the special symbol written by $U$ to find the position $o(M,n)$), and the only non-SFT constraint is to enforce the position of the green line. 

    We reduce the problem \textbf{COF}. Given a Turing machine $M$, let $X_M$ be as $Y$ except that the input is forced to start with $M\#$ and there is no green line. $X_M$ is an SFT.

    Suppose that $M \in \textbf{COF}$. There exists only finitely many finite-input configurations in $X_M$. We define an embedding $f:X_M \to Y$ which adds a green line as the symmetric to the black line relative to the red line at distance at most $2o(M,n)$, where $n$ is the longest halting input of $M$, and is the identity everywhere else.
    
    If $M \notin \textbf{COF}$, suppose for a contradiction that there exists an embedding $f:X_M \to Y$ of radius $r$. If $x \in X_M$ is an input configuration with no unneeded lines, then $x$ is $(0,1)$-periodic everywhere except on the upper-right quarterplane, so $f(x)$ is too. $f(x)$ cannot be $(0,1)$-periodic by injectivity; therefore $f$ maps input configurations to input configurations, and finite-input configurations to finite-input configurations. 

    Because $f$ is injective, we find arbitrarily large inputs $M\#n$ mapped by $f$ to $M'\#n'$ such that $o(M', n') \geq o(M,n)$, and in particular, we find $M$ and $n$ such that $2o(M', n') \geq o(M,n) + r$. Looking at the lower-right quarterplane, we find a monochromatic $2r \times 2r$ pattern mapped to a green symbol, so $f(x)$ has many green lines, which is a contradiction.
\end{proof}

The next result provides a natural sufficient condition for the complexity of these problems to reach their lower bounds.

\begin{proposition}\label{thm:embed-lower}
    Let $Y$ be an effective subshift that contains only finitely many SFT. Then the problems $X\subseteq Y$ and $X\hookrightarrow Y$ for SFT inputs are $\Sigma^0_1$-complete.
\end{proposition}

\begin{proof}
Since $Y$ contains finitely many SFT $Y_0,\dots Y_n$, it is enough to check whether $X = Y_k$ (respectively,  $X \simeq Y_k$) for some $k$ with $0\leq k\leq n$. These problems are $\Sigma^0_1$ by Lemma \ref{upper-bound-morphisms}.
\end{proof}

Proposition \ref{thm:embed-lower} applies, for example, to the $\Z$-subshift $Y_d$ on alphabet $\{0,1\}$ where all distances between consecutive ones must be distinct, which is far from minimal. Proposition \ref{thm:embed-lower} cannot hold for effective inputs, as the reader can check that $X\subseteq Y_d$ and $X\hookrightarrow Y_d$ are $\Pi^0_2$-hard for effective inputs.


\section{The equality problem $X=Y$}\label{sec:equality}

Corollary \ref{prop:some-berger-properties} shows that the equality problem for SFT inputs is $\Sigma_1^0$-complete as long as $Y$ is an SFT (otherwise, output ``no'' for every input). Therefore we are interested in the equality problem with effective inputs. We also assume that the parameter $Y$ is nonempty; determining whether an effective subshift is empty is a $\Sigma_1^0$-complete problem.  

Recall from Theorem \ref{upper-bound-inclusion-two-sofic-subshifts} that the equality problem with two effective inputs is $\Pi_2^0$-complete, so this is the general upper bound for the parametrized problem. In the next result we show that  $D(\Sigma_1^0)$ is always a lower bound.
\begin{theorem}\label{complexity-equality-problem}
    Let $Y$ be a nonempty effective subshift. Then the equality problem $X=Y$ for effective inputs is $D(\Sigma^0_1)$-hard.
\end{theorem}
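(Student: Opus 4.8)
The plan is to give a many-one reduction from a canonical $D(\Sigma^0_1)$-complete problem. Recall that $D(\Sigma^0_1)$ consists of the differences $A\setminus B$ of two $\Sigma^0_1$ (c.e.) sets; membership $x\in A\setminus B$ amounts to a $\Sigma^0_1$ condition ($x\in A$) together with a $\Pi^0_1$ condition ($x\notin B$). Concretely, given c.e.\ sets $A,B$ and an instance $x$, I would compute two Turing machines $M_A$ and $M_B$ with $M_A(\varepsilon)\downarrow \iff x\in A$ and $M_B(\varepsilon)\downarrow \iff x\in B$, and then produce an effective subshift $X=X_{M_A,M_B}$ with $X=Y \iff (M_A(\varepsilon)\downarrow \wedge M_B(\varepsilon)\uparrow)$, i.e.\ iff $x\in A\setminus B$. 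Decomposing $X=Y$ as $(X\subseteq Y)\wedge(Y\subseteq X)$, the idea is to make the first inclusion track $M_A$ (a $\Sigma^0_1$ condition) and the second inclusion track $M_B$ (a $\Pi^0_1$ condition), using that these two inclusions can be controlled by independent and opposite mechanisms.

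For the direction $X\subseteq Y$, I would adjoin to $Y$ a ``witness'' subshift on a fresh alphabet $\mathcal C$ disjoint from the alphabet $\A$ of $Y$. Let $N$ be the effective subshift on $\mathcal C$ whose configurations are the non-halting space-time diagrams of $M_A$, built as in \Cref{fig:computationtiles} with the halting symbol forbidden; then $N\neq\emptyset$ iff $M_A(\varepsilon)\uparrow$, i.e.\ $N=\emptyset$ iff $M_A(\varepsilon)\downarrow$. Since every configuration of $N$ uses a symbol outside $\A$, it lies outside $Y$; hence for $X$ equal to its $\A$-part together with $N$ one has $X\subseteq Y \iff N=\emptyset \iff M_A(\varepsilon)\downarrow$. (The union of two subshifts over disjoint alphabets is again an effective subshift, obtained by additionally forbidding all mixed adjacent pairs, which forces each configuration to be monochromatic in its alphabet.)

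For the direction $Y\subseteq X$, I would exploit the opposite, monotone-friendly mechanism: fix once and for all a letter $a\in\A$ that occurs in $Y$ — such a letter exists because $Y\neq\emptyset$ (take $a=y(0)$ for any $y\in Y$) — and let the $\A$-part of $X$ be $Y\cap X_{\mathcal G}$, where the c.e.\ set $\mathcal G$ is empty as long as $M_B$ has not halted and becomes $\{a\}$ once $M_B(\varepsilon)$ halts. If $M_B(\varepsilon)\uparrow$ then $\mathcal G=\emptyset$ and the $\A$-part is exactly $Y$, so $Y\subseteq X$; if $M_B(\varepsilon)\downarrow$ then the singleton pattern $a$ is eventually forbidden, so the configuration of $Y$ taking value $a$ at the origin is removed and, since the witness layer $N$ cannot cover any $\A$-configuration, $Y\not\subseteq X$. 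Thus $Y\subseteq X \iff M_B(\varepsilon)\uparrow$. Combining the two analyses gives $X=Y \iff (M_A(\varepsilon)\downarrow \wedge M_B(\varepsilon)\uparrow)$, as required, and the whole description of $X$ (the forbidden patterns of $Y$, the conditional pattern $a$, and the computation rules of $N$) is uniformly computable from $x$.

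The delicate point, and the reason the two inclusions must be handled by genuinely different devices, is an asymmetry in what effective subshifts can express: emptiness of an effective subshift is a $\Sigma^0_1$ predicate and nonemptiness is $\Pi^0_1$, so a fresh witness layer can only be made to appear under a $\Pi^0_1$ condition (non-halting) — this is exactly why it controls $X\subseteq Y$ as a $\Sigma^0_1$ condition via its emptiness, and why it cannot by itself also encode the second, $\Pi^0_1$, half. The second half must therefore be produced inside the alphabet of $Y$ by forbidding an actual pattern of $\Le(Y)$, which is possible monotonically only when the triggering machine halts. Matching these opposite monotonicities so that the YES-instance yields exactly $X=Y$ — neither a spurious extra configuration from $N$ nor a missing configuration from the removed letter — is the main thing to get right; the remainder is the routine check that the listed families of forbidden patterns are uniformly c.e.
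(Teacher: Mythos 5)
Your overall architecture is sound and in fact mirrors the paper's proof: the paper reduces from $DP^c\times DP$ and builds $X$ as the projection of $Z_2\times Y\;\cup\;Z_1\times\{\#\}^{\Z^d}$, so that the inclusion $X\subseteq Y$ is controlled by a fresh-alphabet witness appearing under a $\Pi^0_1$ condition (exactly your layer $N$), while $Y\subseteq X$ is controlled by a second $\Pi^0_1$ event. Your handling of the second inclusion is slightly different and more elementary: instead of deleting all of $Y$ via a product with $Z_2$ followed by a projection (which needs closure of effective subshifts under factor maps), you delete a single letter $a\in\Le(Y)$ once $M_B$ halts. That variant is correct: since $Y$ is a fixed nonempty parameter, the letter $a$ can be hard-wired, and the four-case analysis you sketch goes through.

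There is, however, one step that fails as written: the claim that the subshift $N$ built from \Cref{fig:computationtiles} with the halting symbol forbidden satisfies $N=\emptyset$ iff $M_A(\varepsilon)\downarrow$. It does not: that tileset always admits the all-blank configuration, as well as degenerate configurations in which a computation has no initial tile (``comes from infinity'') or no head at all, so $N$ is nonempty regardless of whether $M_A$ halts. Indeed, producing an \emph{SFT} that is empty exactly when a given machine halts is the content of Berger's theorem ($\Pi^0_1$-completeness of $\DP(\Z^d)$ for $d\geq 2$) and requires far more machinery than the bare computation tiles. Fortunately, since you only need $N$ to be an \emph{effective} subshift, the repair is a one-liner and is exactly the device the paper uses in \Cref{appendix-rice-theorem-for-effective}: take $N$ on a fresh single symbol $\#$ and forbid every pattern of shape $\{-n,\dots,n\}^d$ for which $M_A(\varepsilon)$ halts within $n$ steps; then $N=\{\#^{\Z^d}\}$ if $M_A(\varepsilon)\uparrow$ and $N=\emptyset$ otherwise, uniformly in $M_A$ and in every dimension $d\geq 1$ (this is the $\Sigma^0_1$-completeness of emptiness for effective subshifts, which the paper's proof also invokes via the Domino problem). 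With $N$ so defined, the rest of your argument — the disjoint-alphabet union, the equivalence $X\subseteq Y\Leftrightarrow N=\emptyset$, and the equivalence $Y\subseteq X\Leftrightarrow M_B(\varepsilon)\uparrow$ — is correct and yields the stated $D(\Sigma^0_1)$-hardness.
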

\begin{proof} 
Let $\textbf{P}$ be the problem of determining whether an effective subshift on $\Z^d$ is empty ($d \geq 1$). It is known that $\textbf{P}$ is $\Sigma_1^0$-complete. This implies that $\textbf{P}\times \textbf{coP}$ is $D(\Sigma_1^0)$-complete. 

Given a nonempty effective subshift $Y$, we proceed by reduction of the problem $\textbf{P}\times \textbf{coP}$. Let $(Z_1,Z_2)$ be a pair of effective subshifts. Define $Z=(Z_2\times Y) \cup (Z_1\times \{\#\}^{\Z^d})$, where $\#$ is a fresh symbol, which is an effective subshift. Let $F\colon Z\to Y\cup\{\#\}^{\Z^d}$ be the projection to the second coordinate, and put $X=F(Z)$. Since effective subshifts are closed by factor maps \cite{aubrun_notion_2017}, $X$ is an effective subshift. This process is computable in the sense that one can computably enumerate a set of forbidden patterns for $X$ from $(Z_1,Z_2)$. Therefore:
    \[X=\begin{cases}
        Y \ &Z_1 =\emptyset, \ Z_2 \ne \emptyset \\
        \{\#\}^{\Z^d} \ &Z_1 \ne\emptyset, \ Z_2=\emptyset\\
        \emptyset \ &Z_1=\emptyset, \ Z_2=\emptyset \\
        Y \cup \{\#\}^{\Z^d} \ & Z_1\ne\emptyset, \ Z_2\ne\emptyset 
    \end{cases}\]
    In particular $X=Y$ if and only if $Z_1=\emptyset$ and $Z_2\ne\emptyset$. 
\end{proof}
The next result provides a sufficient condition for the equality problem to be $D(\Sigma_1^0)$-complete. 
\begin{proposition}\label{equality-for-sft-with-computable-language}
    Let $Y$ be a nonempty SFT  with computable language. Then $X=Y$ for effective inputs is $D(\Sigma^0_1)$-complete.
\end{proposition}
\begin{proof}
The lower bound  follows from Theorem \ref{complexity-equality-problem}, so we only need to prove the upper bound.  Since $Y$ is an SFT, the problem $X \subseteq Y$ is $\Sigma^0_1$ by Lemma \ref{upper-bound-inclusion}. Since $\Le(Y)$ is computable, $\Le(Y) \subseteq \Le(X)$ is a  $\Pi^0_1$ problem. We conclude that $X=Y$ is $D(\Sigma^0_1)$.
\end{proof}

\begin{proposition}
    There exists an SFT $Y$ such that $X=Y$ for effective inputs is $\Pi^0_2$-complete.
\end{proposition}

\begin{proof}
    Same proof as Theorem \ref{thm:upper-bound-incl}.
\end{proof}

\begin{proposition}\label{equality-non-sft}
    Let $Y$ be an effective subshift which is not an SFT. Then the equality problem $Y = X$ on effective inputs is $\Pi_2^0$-complete.
\end{proposition}

\begin{proof}
Same proof as Theorem \ref{inclusion-non-sft}.
\end{proof}

These results show that, in dimension $1$, the equality problem with effective inputs is never decidable, but its complexity completely characterizes SFTs within effective subshifts. 
\begin{corollary}
Let $Y$ be a nonempty effective $\Z$-subshift. The equality problem $Y=X$ with effective inputs is $D(\Sigma_1^0)$-complete if $Y$ is an SFT and $\Pi_2^0$-complete otherwise.
\end{corollary}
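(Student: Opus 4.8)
The plan is to prove this as a clean dichotomy obtained by assembling two results already established for general effective subshifts, with the only genuinely dimension-specific ingredient being the classical fact that one-dimensional SFTs have computable language. I would begin by splitting into the two cases according to whether the parameter $Y$ is of finite type.

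For the case where $Y$ is not an SFT, there is nothing new to do: \Cref{equality-non-sft} already states that for any effective subshift $Y$ which is not an SFT, the equality problem $Y=X$ on effective inputs is $\Pi_2^0$-complete, in every dimension $d\geq 1$. So this half of the corollary is immediate.

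For the case where $Y$ is an SFT, the key observation is that in dimension $d=1$ every SFT has computable language. This is the standard fact recalled in the proof of \Cref{thm:embed1} (``SFT have computable language'' in dimension $1$): a one-dimensional SFT is recognized by a finite labelled graph, so membership of a pattern in $\Le(Y)$ reduces to a decidable reachability/walk question (as in the graph presentation of \cite{lind_introduction_1995}). Once I record that $\Le(Y)$ is computable, I can invoke \Cref{equality-for-sft-with-computable-language} verbatim, which gives that $X=Y$ for effective inputs is $D(\Sigma_1^0)$-complete. Since $Y$ is assumed nonempty, the hypotheses of that theorem are met.

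Combining the two cases yields the stated dichotomy. There is no serious obstacle: the whole content is the recognition that passing to dimension $1$ eliminates the gap between ``SFT'' and ``SFT with computable language'' that exists in higher dimensions, so that \Cref{equality-for-sft-with-computable-language} applies to \emph{every} nonempty $\Z$-SFT rather than only to those with computable language. The mild point worth stating explicitly is that the two complexity classes $D(\Sigma_1^0)$ and $\Pi_2^0$ are distinct, so the classification of $Y$ as SFT or non-SFT is faithfully detected by the complexity of the equality problem; this is what justifies the remark, preceding the corollary, that the complexity ``completely characterizes SFTs within effective subshifts'' in dimension $1$.
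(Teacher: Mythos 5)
Your proposal is correct and follows essentially the same route as the paper's own proof: split on whether $Y$ is an SFT, apply \Cref{equality-non-sft} in the non-SFT case, and in the SFT case use the classical fact that $\Z$-SFTs have computable language \cite{lind_introduction_1995} to invoke \Cref{equality-for-sft-with-computable-language}. The paper's proof is just a condensed citation of these same ingredients (plus \Cref{upper-bound-inclusion-two-sofic-subshifts} for the general upper bound), so there is nothing to add.
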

\begin{proof}
    Recall that in dimension $d=1$ every SFT has computable language \cite{lind_introduction_1995}. The claim follows from this, Theorem \ref{upper-bound-inclusion-two-sofic-subshifts}, Property \ref{equality-for-sft-with-computable-language}, and Property \ref{equality-non-sft}.
\end{proof}

\section{The conjugacy problem $X\simeq Y$}\label{sec:conjugacy}

As we saw in Proposition \ref{conjugacy-two-SFTs}, the conjugacy problem for SFT inputs is $\Sigma_1^0$-complete regardless of the parameter $Y$. Therefore we focus on effective inputs. In this case the complexity of the problem ranges between $D(\Sigma_1^0)$ and $\Sigma_3^0$ and strongly depends on the parameter.

Recall from Theorem \ref{conjugacy-two-effective-subshifts} that the conjugacy problem with two effective inputs is $\Sigma_3^0$-complete, so this is the general upper bound. We show that, even in $\Z$, there is an SFT whose conjugacy problem with effective inputs attains this maximal complexity. 
\begin{theorem}\label{conjugacy-hardness-Z}
Let $d\geq 1$. There is a $\Z^d$-SFT $Y$ such that $Y \simeq X$ for effective inputs is $\Sigma^0_3$-complete.
\end{theorem}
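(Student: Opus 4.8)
The plan is to combine the two ingredients already available: the upper bound is immediate, since the parametrized problem is no harder than the two-input version, which is $\Sigma_3^0$ by \Cref{conjugacy-two-effective-subshifts}. For the lower bound I would reduce from \textbf{COF}, which is $\Sigma_3^0$-complete by \Cref{classicalpb}. The goal is to fix a single $\Z$-SFT $Y$ and describe a computable map $M \mapsto X_M$, where each $X_M$ is an effective $\Z$-subshift presented by a recursively enumerable set of forbidden patterns, such that $X_M \simeq Y$ if and only if $M \in \textbf{COF}$. Crucially, in dimension one we do not simulate computation inside the subshift (the domino problem is decidable there); instead we key forbidden patterns directly to halting events $\{n : M(n)\downarrow\}$, exactly as in the construction of \Cref{answer1}.

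The engine of the reduction is that being of finite type is a conjugacy invariant, so an input that fails to be an SFT can never be conjugate to the fixed SFT $Y$. This is what lets conjugacy detect a property strictly finer than equality and reach $\Sigma_3^0$ rather than the $\Pi_2^0$ of the equality problem. Concretely, I would encode the halting of $M$ on input $n$ so that it toggles a single gap-type constraint (in the spirit of the forbidden patterns $\{01^n0 : n \notin E\}$ of \Cref{answer1}), arranged so that cofinitely many halting events force $X_M$ into the conjugacy class of $Y$, while infinitely many non-halting inputs produce an object outside that class.

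For the two directions: assuming $M \in \textbf{COF}$, only finitely many inputs fail to halt, so $X_M$ differs from the generic configuration in a bounded, finitely supported way; I would then write down an explicit sliding block code $X_M \to Y$ that normalizes these finitely many exceptions, and check it is a conjugacy. Assuming $M \notin \textbf{COF}$, infinitely many inputs fail to halt, and I would rule out conjugacy to $Y$ by exhibiting a conjugacy invariant that separates the two, using finiteness of type together with a count of periodic or low-complexity configurations that $Y$ does not possess. The higher-dimensional statement would then follow by transporting the one-dimensional example through a standard lift, as discussed in \Cref{sec:dim1}.

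The main obstacle is to make the positive case land in a \emph{single} fixed conjugacy class: the finite exceptional data produced by the cofinitely many halting inputs must be genuinely conjugacy-absorbable, so that all such $X_M$ are mutually conjugate to the one fixed $Y$, rather than merely being ``some SFT'' whose isomorphism type drifts with $M$. The delicate subcase is where $M$ halts on only finitely many inputs: here $X_M$ may again be of finite type, so membership in \textbf{COF} cannot be read off from finiteness of type alone, and the invariant used in the negative direction must be refined to exclude these SFTs from the conjugacy class of $Y$ as well. Balancing the recursively enumerable (halting-keyed) presentation of $X_M$ against the need to detect, purely through the conjugacy type, the $\Pi_1^0$ event ``infinitely many non-halting inputs'' is where the real care lies.
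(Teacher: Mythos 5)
Your upper bound and the skeleton of your reduction are exactly the paper's: reduce \textbf{COF} to $X_M \simeq Y$ for a fixed one-dimensional SFT $Y$, key forbidden gap words to halting events in the spirit of \Cref{answer1}, and use finiteness of type as the conjugacy invariant in the negative direction (the paper takes $Y \subseteq \{0,1,2\}^{\Z}$ forbidding $\{20,21\}$ and forbids the words $01^{n}2$ keyed to halting). But your proposal stops precisely at the two steps you flag, and those steps carry all the content. The paper solves them by preprocessing $M$ into a machine $M'$ whose divergence set is a subset of $\{2^{n} : n\in\N\}$ in computable bijection with the \emph{finite subsets} of the divergence set of $M$. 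This buys two things for which you offer no substitute: if $M \in \textbf{COF}$, the divergence set of $M'$ is finite of cardinality exactly a power of two $2^{\ell}$, which is what permits an attempt to recode the $2^{\ell}$ surviving tails $01^{n_i}2^{\omega}$ bijectively against the $2^{\ell}$ words of $\{0,1\}^{\ell}$ so that $X_M$ lands on the \emph{single fixed} $Y$ rather than on an $M$-dependent subshift; and if $M \notin \textbf{COF}$, the divergence set is infinite but contains no infinite arithmetic progression, so the pumping lemma applied to $\{n : 01^{n}2 \in \Le(X_M)\}$ shows $X_M$ is not even sofic --- this is the concrete ``refined invariant'' your negative direction only gestures at. Without the AP-freeness trick your delicate subcase is fatal: with naive keying, a machine halting on only finitely many inputs is not in \textbf{COF}, yet yields an SFT $X_M$ (finitely many forbidden words), and you name no invariant excluding it from the conjugacy class of $Y$. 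Your appeal to lifts for $d\geq 2$ is also unsupported: \Cref{embedding-of-lift-sfts-is-decidable} covers $\hookrightarrow$ and $\subseteq$ only, the remark following it warns that such transfers can fail, and the paper itself treats only $d=1$ explicitly.

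That said, your ``main obstacle'' paragraph is more prescient than you may realize: absorbing the finite exceptional data into a fixed conjugacy class is exactly where the paper's own execution appears to break for this choice of $Y$. When only the tails $01^{n_1},\dots,01^{n_{2^{\ell}}}$ survive, the configuration ${}^{\omega}12^{\omega}$ becomes an \emph{isolated point} of $X_M$: any configuration of $X_M$ agreeing with it on a window exhibiting $1^{N+2}2$, where $N=\max_i n_i$, must equal it, since a $0$ before such a run would create a forbidden $01^{m}2$ with $m>N$. But $Y$ is perfect (${}^{\omega}1\,0\,1^{m}\,2^{\omega} \to {}^{\omega}12^{\omega}$ as $m\to\infty$), so $X_M \not\simeq Y$; correspondingly, the paper's recoding map sends both ${}^{\omega}12^{\omega}$ and ${}^{\omega}1\,0\,1^{n_i}\,2^{\omega}$ (for the index $i$ with $w_i = 1^{\ell}$) to ${}^{\omega}12^{\omega}$ and is not injective. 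One can even check that $X_M$ is then not of finite type: every window of ${}^{\omega}0\,1^{m}\,2^{\omega}$ with $m>N$ occurs in $X_M$ (inside $\{0,1\}^{\Z}$ or ${}^{\omega}12^{\omega}$), although the configuration itself is excluded. So ``an explicit sliding block code normalizing the finitely many exceptions'', as both you and the paper propose, does not suffice as stated: a correct proof must additionally control the all-ones tail, e.g.\ by modifying $Y$ or the keying so that ${}^{\omega}12^{\omega}$ is excluded or matched on both sides. In short: right strategy and well-chosen worries, but the decisive mechanisms --- cardinality normalization, AP-freeness plus pumping, and a parameter $Y$ for which the positive case genuinely closes --- are absent from your proposal, and the last of these is delicate enough that the paper's own proof stumbles on it.
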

\begin{proof}
We prove the result for $d=1$. Let $Y\subset\{0,1,2\}^\Z$ be the SFT defined by forbidding $\{20,21\}$. Thus elements in $Y$ either belong to $\{0,1\}^\Z$, to $\{2\}^\Z$, or are a sequence of symbols in $\{0,1\}$ followed by infinitely many $2$'s. We prove that determining $Y\simeq X$ with effective inputs is $\Sigma_3^0$-complete using a reduction of the problem \textbf{COF}.

Given a Turing machine $M$ we compute below a Turing machine $M'$ such that:  
\begin{enumerate}
\item $M\in\textbf{COF} \Leftrightarrow M'\in \textbf{COF}$,
\item If $M\in\textbf{COF}$, then the cardinality of $\{n\in \N : M'(n)\uparrow\}$  is a power of two,
\item If $M\not\in \textbf{COF}$, then $\{n\in \N : M'(n)\uparrow\}$  contains no infinite arithmetic progression.
\end{enumerate}
Let $(S_n)_{n\in\N}$ be a computable injective enumeration of all the finite subsets of $\N$, $A=\{2^n : n\in\N\}$ (or any infinite computable set that does not contain $0$ nor any infinite arithmetic progression), and $g\colon\N\to A$ be the increasing computable bijection. Define $M'$ as follows:
\begin{itemize}
    \item if $n\not\in A$, then $M'(n)\downarrow$;
    \item if $n\in A$, then $M'(n)$ halts if and only if $M(k)\downarrow$ for some $k\in S_{g^{-1}(n)}$.
\end{itemize}

In this manner $\{n\in\N  : M'(n)\uparrow\}$ is a subset of $A$ and is in bijection with the finite subsets of $\{x\in \N : M(x)\uparrow\}$; in particular, if it is finite, its size is a power of two. Thus $M'$ satisfies the desired properties. 

Given a Turing machine $M$ we compute an effective subshift $X_M\subset\{0,1,2\}^\Z$ defined by the set of forbidden words $\{20,21\}\cup\{01^n2 : M'(n)\downarrow\}$. Every configuration in $X_M$ can be written as either $^\omega\{0,1\}^\omega$; $^\omega 2^\omega$; $^\omega 12^\omega$; or $^\omega\{0,1\}01^{n}2^\omega$ for some $n$ with $M'(n)\uparrow$. Note that, for all $n\geq 0$, $01^n2\in \Le(X_M)$ if and only if $M'(n)\uparrow$. 

We now prove that $M\in\textbf{COF}$ if and only if $X_M \simeq Y$.

First, assume that $M\not\in \textbf{COF}$. Since $M'\not\in\textbf{COF}$, we have $01^n2\in \Le(X_M)$ for infinitely many values of $n\in\N$. Assume that $X_M$ is an SFT to reach a contradiction. Since the language of an $\Z$-SFT is regular \cite{lind_introduction_1995}, we use the pumping lemma for regular languages and find an infinite arithmetic progression $P\subseteq \N$ such that $01^n2\in\Le(X_M)$ for all $n\in P$. This implies that $P\subset\{n\in\N:M'(n)\uparrow\}$, which is a contradiction with Property 2. Therefore $X_M$ is not an SFT and cannot be conjugate to $Y$.

Now assume that $M\in\textbf{COF}$. Then $\{n\in\N : M'(n)\uparrow\}$ is finite, and its cardinality is a power of two, say, $2^\ell$. Enumerate this set in increasing order as $\{n_1,\dots,n_{2^\ell}\}$. 
Let $\{w_1,\dots,w_{2^\ell}\}$ be the enumeration of all words in $\{0,1\}^\ell$ in lexicographic order. Define a map $F\colon X_M\to Y$ as follows: if $x\in X_M$ contains $01^{n_i}2$ for some $i=1,\dots,2^\ell$, then $F(x)$ is the same configuration where $01^{n_i}2^\ell$ is replaced by $w_i2^{1+n_i}$ (which has the same length); otherwise, $F(x)=x$. The map $F$ is well-defined because every $x\in X_M$ contains at most one occurrence of a word $01^{n_i}2$ for $i=1,\dots,2^\ell$, and it is immediate that $F$ is a conjugacy. 
\end{proof}
We now provide a lower bound. 
\begin{proposition}\label{hardness-conjugacy-with-effective-input}
    Let $Y$ be a nonempty effective subshift. Then the conjugacy problem $X\simeq Y$ with effective inputs is $D(\Sigma_1^0)$-hard. 
\end{proposition}
\begin{proof}
Let $A\subset\N$ be a $D(\Sigma_1^0)$-complete set, and $B$ and $C$ be $\Sigma_1^0$-sets such that $C\subseteq B$ and $A=B\smallsetminus C$. 
For each $n\in\N$, we define $X_n$ on alphabet $\{0,1\}$ as follows: if $n\in B$, we forbid the symbol $0$, and if $n\in C$, we forbid the symbol $1$. This set of forbidden patterns is recursively enumerable from $n$, so $(X_n)_{n\in \N}$ is a computable sequence of effective subshifts.
\[X_n=\begin{cases}
\{1\}^{\Z^d} &\ n\in B, \ n\notin C\\
\emptyset &\ n \in B, \  n\in C \\
\{0,1\}^{\mathbb{Z}^d} &\ n\notin B, \ n\notin C
\end{cases}\]
Notice that $Y\simeq Y\times X_n$ exactly when $|X_n|=1$ (as $Y\times X_n$ is otherwise empty or has too much entropy). This is equivalent to $n\in A$, which is a $D(\Sigma_1^0)$-complete problem. 
\end{proof}
We now examine dynamical conditions that may reduce the complexity of the problem. 
\begin{theorem} \label{abstract-complexity-of-conjugacy}
Let $Y$ be an SFT that is $P$-minimal for inclusion for a $\Pi^0_1$, monotonous, conjugacy-invariant SFT property $P$. In other words:
\begin{itemize}
\item If an SFT $X$ satisfies $P$ then any SFT conjugate to $X$ or containing $X$ satisfies $P$;
    \item $Y$ satisfies $P$, and any strict subshift $X \subsetneq Y$ does not satisfy $P$;
    \item There is an algorithm that, given a finite set of forbidden patterns, halts if and only if the associated SFT does not satisfy $P$.
\end{itemize}
Then the problem $X\simeq Y$ with effective inputs is $D(\Sigma_1^0)$.
\end{theorem}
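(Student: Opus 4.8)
The plan is to collapse the two quantifier sources hidden in conjugacy—searching for a coding, and checking that it is onto—into a single existential ($\Sigma_1^0$) layer and a single universal ($\Pi_1^0$) layer that do \emph{not} interleave, landing the problem in $D(\Sigma_1^0)=\Sigma_1^0\wedge\Pi_1^0$. Concretely, I would establish the equivalence
\[ X\simeq Y \iff (X\hookrightarrow Y)\ \wedge\ \Psi(X), \]
where $X\hookrightarrow Y$ is the embedding problem (which is $\Sigma_1^0$ by \Cref{upper-bound-embeddings} after enumerating local rules), and $\Psi(X)$ is an \emph{intrinsic} $\Pi_1^0$ predicate asserting that $X$ does not embed into any of the proper sub-SFTs $Y_q:=Y\cap X_{\{q\}}$ obtained by adding one forbidden pattern $q\in\Le(Y)$. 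Note that neither conjunct mentions the other's witness, which is what will keep the complexity from rising.

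A preliminary step—and the place where $\Pi_1^0$-ness and minimality of $P$ really enter—is to show that $\Le(Y)$ is computable. Since $Y$ is an SFT, $\Le(Y)$ is already $\Pi_1^0$, so it suffices to show it is $\Sigma_1^0$. For a pattern $p$, form the SFT $Y_p=Y\cap X_{\{p\}}$; then $p\in\Le(Y)$ iff $Y_p\subsetneq Y$ iff $Y_p$ does not satisfy $P$ (a strict subshift of $Y$ fails $P$ by minimality, whereas $Y_p=Y$ does satisfy it). Running the algorithm provided by the $\Pi_1^0$ hypothesis on the finite description of $Y_p$ therefore semidecides ``$p\in\Le(Y)$'', so $\Le(Y)$ is $\Sigma_1^0$ and hence computable.

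For the characterization I would argue both directions using minimality together with conjugacy-invariance. If $X\simeq Y$ then certainly $X\hookrightarrow Y$; moreover, were $X$ to embed into some $Y_q$ with $q\in\Le(Y)$, the image would be a subshift conjugate to $X\simeq Y$ (hence satisfying $P$) yet strictly contained in $Y$ (hence failing $P$ by minimality), a contradiction, so $\Psi(X)$ holds. Conversely, given an embedding $\phi\colon X\to Y$ with image $Z=\phi(X)\subseteq Y$, if $Z\subsetneq Y$ then $\Le(Z)\subsetneq\Le(Y)$ and some $q\in\Le(Y)\smallsetminus\Le(Z)$ witnesses $Z\subseteq Y_q$; but then $\phi$ already embeds $X$ into $Y_q$, contradicting $\Psi(X)$. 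Hence $Z=Y$ and $\phi$ is a conjugacy.

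Finally I would read off the complexity: $X\hookrightarrow Y$ is $\Sigma_1^0$ by \Cref{upper-bound-embeddings}; for each fixed $q$ the problem $X\hookrightarrow Y_q$ is $\Sigma_1^0$ as well (since $Y_q$ is an SFT), so its negation is $\Pi_1^0$, and because $\Le(Y)$ is computable the universal quantification over $q\in\Le(Y)$ keeps $\Psi(X)$ in $\Pi_1^0$. Therefore $X\simeq Y$ is $\Sigma_1^0\wedge\Pi_1^0=D(\Sigma_1^0)$. I expect the main obstacle to be precisely the design of $\Psi$: the naive route—fix an embedding $\phi$ and test whether its image is all of $Y$—makes the surjectivity test $\Pi_1^0$ only \emph{after} committing to $\phi$, so that the outer search $\exists\phi$ pushes the whole problem up to $\Sigma_2^0$. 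Routing surjectivity through the $\phi$-independent predicate $\Psi$, and first securing computability of $\Le(Y)$, is exactly what decouples the existential search for a coding from the universal check and prevents the two complexity layers from interleaving.
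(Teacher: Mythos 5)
Your proof is correct, but it follows a genuinely different route from the paper's. The paper approximates the input from outside: writing $X_n$ for the SFT obtained from the forbidden patterns of $X$ enumerated within $n$ steps, it proves $X\simeq Y \iff (\exists n\; X_n\simeq Y)\wedge(\forall m\; P(X_m))$; the first conjunct is $\Sigma_1^0$ by the conjugacy test for two SFTs (\Cref{conjugacy-two-SFTs}), and the second conjunct is precisely where monotonicity enters (from $X\subseteq X_m$ and $P(X)$, which holds by conjugacy-invariance). You instead decompose on the parameter side: conjugacy equals embeddability into $Y$ together with non-embeddability into the proper sub-SFTs $Y_q=Y\cap X_{\{q\}}$ for $q\in\Le(Y)$, after first showing that $\Le(Y)$ is computable. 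What your route buys: monotonicity of $P$ is never used (conjugacy-invariance and minimality suffice), so you prove a formally stronger theorem, and you make explicit the computability of $\Le(Y)$, a fact the paper only touches on through its discussion of \cite{amir_minimality_2025}. What the paper's route buys: it avoids your preliminary step and the two structural facts you rely on (a subshift is determined by its language; an injective morphism is a conjugacy onto its image), needing only \Cref{conjugacy-two-SFTs} and the outer SFT approximations $X_n$, which makes for a shorter argument. Both proofs land in $D(\Sigma_1^0)$ for the same structural reason, which you identify correctly: the existential witness (your embedding $\phi$, the paper's index $n$) is decoupled from the universal check (your $\Psi(X)$, the paper's $\forall m\, P(X_m)$), so the quantifiers never interleave.
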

\begin{proof}
Let $Y$ be an SFT as in the statement, and let $X$ be an effective subshift given as input. For all $n$, let $X_n$ be the SFT obtained by forbidding all the forbidden patterns of $X$ that we can computably enumerate in time $n$. We claim that $X$ and $Y$ are conjugate if and only if $(\exists n \ X_n \simeq Y) \land (\forall m, \ P(X_m))$.

Indeed, if $X$ and $Y$ are conjugate then $X$ is an SFT and so $\exists n \ X \simeq X_n$. Since $X\subseteq X_m$ for all $m$ and $P$ is monotonous, so $P(X_m)$ holds for all $m$.

Conversely, suppose that the formula above holds, and fix $n$ so that $X_n$ is conjugate to $Y$. Since $P$ is conjugacy-invariant, $X_n$ satisfies $P$ and every proper subshift of $X_n$ fails to satisfy $P$. If $X_n \neq X$, then $X_m$ is a proper subshift of $X_n$ for $m>n$ large enough, so $X_m$ fails to satisfy $P$, a contradiction with the condition. Thus $X = X_n \simeq Y$.
\end{proof}
\begin{remark}[\cite{amir_minimality_2025}]\label{conjfullsheasy}
    The following properties satisfy the conditions in Theorem \ref{abstract-complexity-of-conjugacy}.
    \begin{enumerate}
        \item Having a dense set of strongly periodic points. 
        \item Having computable topological entropy and being entropy-minimal. 
        \item Being minimal.
    \end{enumerate}
\end{remark}
Combining Remark \ref{conjfullsheasy} and Theorem \ref{abstract-complexity-of-conjugacy}, we find many examples of SFTs for which the conjugacy problem with effective inputs is $D(\Sigma_1^0)$-complete. For instance, this is the case for the fullshift on any number of symbols, any finite subshift, or any minimal subshift. 

We find a relationship between Theorem \ref{abstract-complexity-of-conjugacy} and results in \cite{amir_minimality_2025} on the notion of strong computable type for subshifts. 
An SFT $Y$ has computable language if and only if it is minimal for a $\Pi_1^0$ property (This is stated for a $\Sigma^0_2$ property in \cite[Corollary 13]{amir_minimality_2025}; we clarify this point in the appendix). This property is not guaranteed to be conjugacy invariant; in fact we provide an example where it cannot.
\begin{example}
The SFT $Y$ from Theorem \ref{conjugacy-hardness-Z} is minimal for a $\Pi^0_1$ property, but this property can not be taken conjugacy invariant. 
\end{example}
\begin{proof} Since $Y$ has computable language, it is minimal for a monotonous $\Pi_1^0$ property (as the property given in the proof of \cite[Corollary 13]{amir_minimality_2025} is monotonous). If this property could be taken to be conjugacy-invariant, then by Theorem \ref{abstract-complexity-of-conjugacy} the problem $X\simeq Y$ with effective inputs would be in $D(\Sigma_1^0)$. However, we proved in Theorem \ref{conjugacy-hardness-Z} that this problem is $\Sigma_3^0$-hard. 
\end{proof}
\begin{question}
    Which SFTs are minimal for a conjugacy-invariant $\Pi_1^0$ property? 
\end{question}

\section{Conjugacy and embedding in dimension $1$}\label{sec:dim1}

Given two $\Z$-SFTs as inputs, their equality and inclusion problems are easily seen to be decidable. In contrast, the decidability status of the conjugacy problem is a famous open question, and the embedding problem is probably very difficult as well (see Proposition \ref{embedding-zsft}). Therefore we cannot hope to find a $\Z$-SFT whose conjugacy or embedding problems for SFT inputs are undecidable. Still, obtaining decidability results for one-dimensional subshifts is useful, particularly for the $Y \hookrightarrow X$ problem, because they yield decidability results for higher-dimensional subshifts obtained by taking lifts, as the next result shows.

\begin{proposition}\label{embedding-of-lift-sfts-is-decidable}
    Let $Y'\subseteq \A^{\Z}$ be a $\Z$-subshift. Let $\pi\colon\mathbb{Z}^d\to \mathbb{Z}$ be any surjective group homomorphism, and consider the lift $Y=\{(y(\pi(g))_{g\in\mathbb{Z}^d} : y\in  Y'\}$.
    
    The problem $Y \hookrightarrow X$ with SFT (resp. effective) inputs is many-one equivalent to $Y' \hookrightarrow X$ with SFT (resp. effective) inputs.
    The same result holds for $Y \subseteq X$.
\end{proposition}

\begin{proof}
Given a subshift $X$, consider its periodic part $X_p = \{x\in X : gx=x \ \forall g\in \ker(\pi)\}$. $X_p$ is a subshift which is effective, resp. SFT, when $X$ is. Notice that $Y_p=Y$. Furthermore, $X_p$ is the lift of the subshift $W_X=\{y\in \A^{\Z} : (y(\pi(g))_{g\in\mathbb{Z}^d}\in X\}$
which is, again, effective/SFT when $X$ is.
Since the periods of a configuration are preserved by an embedding, it follows that $Y\hookrightarrow X$ if and only if $Y \hookrightarrow X_p$, which is equivalent to $Y'\hookrightarrow W_X$. A similar argument proves the other direction, and holds for $\subseteq$.
\end{proof}

\begin{remark}
    The previous proof fails for $X \hookrightarrow Y$ (and other problems) because $X\hookrightarrow Y$ if and only if $X_p \hookrightarrow Y$ and $X = X_p$; however, the second condition is undecidable in general.
\end{remark}

Lifts of $\Z$-SFTs with decidable embedding problem form a class of $\Z^d$-SFTs with decidable embedding problem. Unfortunately, we are not able to answer the following question.
\begin{question}\label{question:decidability-embedding-Z-SFTs}
    For which $\mathbb{Z}$-SFTs $Y$ is the problem $Y\hookrightarrow X$ with SFT inputs decidable?
\end{question}

One can see that this problem is decidable for NMC SFTs, defined in \cite{pavlov_structure_2023}. This is a large class in that it is dense in the space of $\Z$-subshifts for a suitable topology \cite[Theorem~3.6]{pavlov_structure_2023}.

We expect Question \ref{question:decidability-embedding-Z-SFTs} to be difficult, as it can be reduced to the (non-uniform) conjugacy problem for SFTs in some cases.
\begin{proposition}
\label{embedding-zsft}
Let $Y$ be a nonempty mixing $\Z$-SFT. The conjugacy problem $Y \simeq X$ with SFT inputs admits a many-one reduction to the embedding problem $Y \hookrightarrow X$ with SFT inputs. 
\end{proposition}

\begin{proof}
    Let $Y$ be a mixing $\Z$-SFT and let $X$ be an SFT given as input for the problem $X\simeq Y$. We check that $X$ is mixing, which is a decidable property (see \cite{lind_introduction_1995}), and answer negatively if it is not. If it is, then by Krieger's embedding theorem we have $Y\simeq X$ if and only if $h_{top}(Y)=h_{top}(X)$ and  $Y\hookrightarrow X$. Therefore we only need to check if $h_{top}(Y)=h_{top}(X)$, which is a decidable question, and finally check whether $Y\hookrightarrow X$. 
\end{proof}

\begin{remark}
    The same proof holds for $X \hookrightarrow Y$. We do not get a many-one equivalence since the answer to the problem $Y \hookrightarrow X$ is not immediate when $X$ is not mixing.
\end{remark}

To conclude, we provide one last decidability result which goes beyond the class of SFTs. Recall that the sunny side up is an effective (in fact sofic) subshift which is not SFT.
\begin{proposition}
    Let $Y = \{x\in \{0,1\}^{\mathbb{Z}} : |x^{-1}\{1\}|\leq 1\}$ be the sunny side up subshift.
    Then $Y \hookrightarrow X$ for SFT inputs is decidable.
\end{proposition}
\begin{proof}
    Check that $X$ contains one configuration of the form ${}^{\omega}awa^{\omega}$, where $a$ is a letter and $w$ a word not of the form $a^n$. This is decidable for $\Z$-SFTs by inspecting a representation (labeled finite graph whose infinite walks are configurations of the SFT, see \cite[\S 2.3]{lind_introduction_1995}). 
\end{proof}

\begin{remark}
Notice that the same problem is undecidable for the two-dimensional sunny side up by Theorem \ref{eventuallyperiodic}. More generally, we do not know of any subshift $Y$ in dimension $2$ with a decidable embedding problem $Y\hookrightarrow X$ with SFT inputs, except for lifts of $\mathbb{Z}$-SFTs or a union or slight generalization thereof.
\end{remark}

\section{Conclusion}

We conclude this article by a direction for future work. For a factor map $F$, the subshift $F(Y)$ is simpler than $Y$ from many points of view. It is natural to expect that a problem associated to $F(Y)$ is always easier than the same problem associated to $Y$. All our characterizations and examples are consistent with this belief, but we did not find a reduction or a systematic argument for arbitrary $Y$ and $F$. The same question can be asked for other dynamical operators such as total simulations, as introduced in \cite{lafitte_computability_2008}.

\bibliographystyle{plainurl}
\bibliography{references}

\newpage\appendix
\section{Proof of Theorem \ref{adyan-rabin-theorem-for-sfts}}\label{appendix-rice-theorems}
We now provide a proof for Theorem \ref{adyan-rabin-theorem-for-sfts}. The proof is almost the same proof given in \cite{carrasco-vargas_rice_2025}, but it has a subtle modification regarding the alphabet that ensures that  conjugacy invariance is not needed from the property.
\begin{proof}[Proof of Theorem \ref{adyan-rabin-theorem-for-sfts}]
    We describe a many-one reduction from the coDomino problem $\coDP$, which is $\Sigma_1^0$-complete according to Berger's theorem \cite{berger_undecidability_1966}. Let $X$ be an instance of $\coDP$, that is, a SFT for which we want to know whether it is empty. Let $X_{+}$ and $X_{-}$ be SFTs as in Definition \ref{def:berger}. Let $\A_{X_{+}}$, $\A_{X_{-}}$, and $\A_{X}$ be the alphabets for $X_{+}$, $X_{-}$, and $X$, respectively, which we assume are subsets of $\N$. Since the property is not assumed to be conjugacy invariant, we must be careful with the alphabets in our reductions. Let $N=|\A_{X}\times \A_{X_{-}}|$, let $M=\max\{a : a \in \mathcal \A_{X_{+}}\}$, and let $\B=\{M+1,\dots,M+N\}$. In this manner $\B$ has the same cardinality as $\A_{X}\times \A_{X_{-}}$, but we guarantee that $\A_{X_{+}}$ and $\B$ are disjoint. Let $Y$ be an SFT on alphabet $\B$ which is topologically conjugate to $X\times X_{-}$. Note that an SFT presentation for $Y$ can easily be computed from the input $X$. Finally, let 
    \[Z=X_{+}\cup Y.\]
    A presentation for $Z$ is easily computed from the input $X$. If $X$ is empty, then $Z$ is set-theoretically equal to $X_{+}$, and therefore it satisfies the property $\mathcal P$. If $X$ is nonempty, then there is a factor map from $Z$  to $X_{-}$, and therefore it does not satisfy the property $\mathcal P$. 
\end{proof}

\section{Proof of Theorem \ref{rice-theorem-for-effective}}
\label{appendix-rice-theorem-for-effective}
\begin{proof}[Proof of Theorem \ref{rice-theorem-for-effective}]
    Suppose that the empty subshift satisfies $\mathcal P$. We show that $\textbf{Halt}\leq_m \mathcal P$. Since the property is nontrivial, there is an effective subshift $X$ which fails to satisfy $\mathcal P$. Let $\A$ be its alphabet, and let $\mathcal F$ be a defining set of forbidden patterns. Let $M$ be a Turing machine, and let $X_M$ be the effective subshift on alphabet $\A$ with set of forbidden patterns $\mathcal F \cup \{p : p\in \A^{\{-n,\dots,n\}^d}\text{ and } M(\varepsilon)\downarrow\text{in $n$ steps}\}$. Then $X_M=\emptyset$ when $M(\varepsilon)\downarrow$, and $X_M=X_{\mathcal F}$ otherwise. 

    If the empty subshift does not satisfy $\mathcal P$, the same proof (taking $X$ satisfying $\mathcal{P}$) shows that $\textbf{coHalt}\leq_m \mathcal P$.
\end{proof}
\section{Proof of Lemma \ref{upper-bound-morphisms}}

As we mentioned before, we repeat ideas from \cite{jeandel_hardness_2015} in the next proof. We only provide the full proof because we do not assume $X$ to be SFT. 
\begin{proof}[Proof of Lemma \ref{upper-bound-morphisms}]
We start with the first item. We first assume that $X$ and $Y$ have the same alphabet. Let $\mathcal F=\{q_1,\dots,q_k\}$ be a defining set of forbidden patterns for $Y$. The fact that $Y$ is an SFT ensures that $X\subseteq Y$ if and only if no configuration from $x$ has a forbidden pattern from $\mathcal F$, which is equivalent to 
\[\mathcal F\subseteq \coLe(X).\]
Since $\coLe(X)$ is $\Sigma_1^0$ and $\mathcal F$ is finite, this is a $\Sigma_1^0$ condition. If $X$ and $Y$ have different alphabets $\A$ and $\B$ then we replace them by $\A\cup\B$ (this requires to computably modify the defining sets of forbidden patterns for $X$ and $Y$).  

Item 2 follows from Item 1, plus the fact that given an effective subshift $X$ and a local rule $f$ as in the statement, the image $F(X)$ is also effective and one can compute $\coLe(F(X))$ from $f$ and $\coLe(X)$. This follows from the well-known fact that effective subshifts on $\Z^d$ are closed by factor maps \cite[Proposition 2.7]{aubrun_notion_2017}.

We now prove the third item. Thanks to the second item, it suffices to show that the problem of determining whether $F$ is injective is $\Sigma_1^0$, provided that we have already verified that $F(X)\subseteq Y$. 

Suppose that $F(X)\subseteq Y$, and suppose without loss of generality that the domain of $f$ is $D=\{0,\dots,r-1\}^d$. We claim that $F$ fails to be  injective if and only if for all $n\geq r$ we can find two patterns $p$ and $q$ in $\Le_n(X)$ with $p(0_{\Z^d})\ne q(0_{\Z^d})$ and with $F(p)=F(q)$. This condition is clearly $\Pi_1^0$, so its negation is $\Sigma_1^0$. Suppose first that $F$ is not injective. Then we can find $x,y\in X$ with $x\ne y$ and  $F(x)=F(y)$. Shifting these elements if necessary, we can assume $x(0_{\Z^d})\ne y(0_{\Z^d})$. Then it suffices to choose $p$ and $q$ as the restrictions of $x$ and $y$ to $\{0,\dots,n-1\}^d$. Suppose now that for all $n\geq r$ we can find two patterns $p_n$ and $q_n$ in $\Le_n(X)$ with $p_n(0_{\Z^d})\ne q_n(0_{\Z^d})$ and with $F(p_n)=F(q_n)$. Then for each $n$ we choose a pair $(x_n,y_n)$ of elements in $X$ such that $p_n\sqsubseteq x_n$ and $q_n\sqsubseteq y_n$. Since $X\times X$ is compact and metric, we can find a subsequence of  $(x_n,y_n)$ which converges. Let $(x,y)$ be this limit. Our choices of $p_n$ and $q_n$ ensure that $x$ and $y$ differ at $0_{\Z^d}$ while $F(x)=F(y)$, so $F$ is not injective.
\end{proof}

\section{Technical point on \cite[Corollary 13]{amir_minimality_2025}}

Corollary 13 in \cite{amir_minimality_2025} states that a subshift has strong computable type -- in the case of SFT, this is equivalent to having computable language -- when it is minimal for a $\Sigma^0_2$ property. While not explicitely stated, this is equivalent to being minimal for a $\Pi^0_1$ property.

By definition, a $\Sigma^0_2$ property is a computable union of $\Pi^0_1$ properties. Therefore a $\Pi^0_1$ property is $\Sigma^0_2$. For the other direction, it is enough to see that being minimal for a union of properties is equivalent to being minimal for one of the properties.

\end{document}